\documentclass[11pt,a4paper]{amsart}

\usepackage{amsmath}
\usepackage{amssymb,amscd,amsxtra,calc}
\usepackage{mathrsfs}
\usepackage{mathtools}
\usepackage{enumitem}
\usepackage{tikz-cd}
\usepackage{array}
\usepackage{booktabs}
\usepackage[colorlinks,linkcolor=blue,anchorcolor=blue,citecolor=green,backref=none]{hyperref}

\theoremstyle{plain}
\newtheorem{thm}{Theorem}[section]

\newtheorem{theorem}[thm]{Theorem}
\newtheorem{proposition}[thm]{Proposition}
\newtheorem{lemma}[thm]{Lemma}
\newtheorem{corollary}[thm]{Corollary}

\theoremstyle{definition}
\newtheorem{definition}[thm]{Definition}

\newtheorem{question}[thm]{Question}
\newtheorem{remark}[thm]{Remark}
\newtheorem{conjecture}[thm]{Conjecture}

\newcommand{\Proj}{\mathrm{Proj}}

\newcommand{\rank}{\mathrm{rank}}

\newcommand{\End}{\mathrm{End}}
\newcommand{\codim}{\mathrm{codim}}
\newcommand{\Sing}{\mathrm{Sing}}

\title[Log Calabi--Yau structure for endomorphisms on $\mathbf{P}^n$]
{Log Calabi--Yau structure for endomorphisms on $\mathbf{P}^n$}

\author{Yujie Luo and Sheng Meng}
\date{}

\address{School of Mathematical Sciences, University of Science and Technology of China, People's Republic of China}

\email{yujieluo96@gmail.com}

\address{
    \textsc{School of Mathematical Sciences, Ministry of Education Key Laboratory of Mathematics and Engineering Applications \&
    Shanghai Key Laboratory of PMMP}\endgraf
    \textsc{East China Normal University, Shanghai 200241, China}\endgraf
}

\email{smeng@math.ecnu.edu.cn}

\subjclass[2020]
{Primary 14E22; 
Secondary 14B05, 
37F80. 
}

\keywords{Jacobian determinant, ramification divisor, polarized endomorphism, valuative tree, log canonical threshold, log Calabi-Yau pair}

\begin{document}

\begin{abstract}
Let $f:\mathbf{P}^n\to\mathbf{P}^n$ be a $q$-polarized endomorphism, where $q>1$, and let $R_f$ be its ramification divisor. We study the singularities of the ramification pair $(\mathbf{P}^n,R_f)$. We show that, for a general $f$, the pair $(\mathbf{P}^n,R_f)$ is log canonical. When $n=2$, we prove that there exists an integer $s\geq1$ such that the log canonical threshold $\mathrm{lct}(\mathbf{P}^2;R_{f^s})\geq1/(q^s-1)$. The passage to an iterate is necessary in general, and the lower bound is optimal. In particular, $(\mathbf{P}^2,R_{f^s}/(q^s-1))$ is a log Calabi--Yau pair, completing the proof of Gongyo's conjecture for smooth projective surfaces.
\end{abstract}

\maketitle
\tableofcontents

\section{Introduction}

We work over the field of complex numbers.

\medskip

Let $f: X\to X$ be a surjective endomorphism of a smooth projective variety $X$. The ramification divisor formula
$$ K_X=f^*K_X+R_f $$
gives a natural effective divisor
$$ R_f:=\sum_P\bigl(f^*P-(f^{-1}(P))_{\mathrm{red}}\bigr), $$
where $P$ runs over the prime divisors on $X$.

More recently, the dynamical Iitaka theory introduced by Meng and Zhang \cite{MZ23} studies the asymptotic Iitaka dimension of effective divisors under iteration; see also \cite{MWY25} for its development for ramification divisors. Its main goal is to reduce questions about endomorphisms of general projective varieties to questions about polarized endomorphisms of simpler varieties.

The case of projective space is fundamental in the study of polarized endomorphisms, and we shall focus on it throughout. A finer classification of such endomorphisms in terms of the ramification divisor is then needed. One classical condition is postcritical finiteness: an endomorphism $f$ is called \emph{postcritically finite} (PCF) if its ramification locus $\mathrm{Supp}(R_f)$ is preperiodic. Recently, Gauthier, Taflin, and Vigny proved that, for every $q\geq2$, PCF endomorphisms are not Zariski dense in the parameter space of $q$-polarized endomorphisms of $\mathbf{P}^2$ \cite{GTV26}. This stands in sharp contrast to the classical one-dimensional picture, where PCF rational maps of any fixed degree are Zariski dense in the corresponding moduli space \cite{DeM18}. This sparsity result motivates a classification that detects finer geometric features of $R_f$.

It is natural to use the singularities of the ramification pair $(X,R_f)$ to study a polarized endomorphism $f$.

We first observe that the smoothness of $R_f$ for a general $f$ depends on the dimension. Denote by $\mathrm{End}_q(\mathbf{P}^n)$ the space of endomorphisms $f:\mathbf{P}^n\to\mathbf{P}^n$ with $f^*\mathcal{O}(1)\simeq\mathcal{O}(q)$.

\begin{theorem}
\label{thm: ramification-dimension-dichotomy}
Let $q\geq 2$.
\begin{enumerate}[label=\textup{(\arabic*)},leftmargin=2.5em]
\item If $1\leq n\leq 3$, then $R_f$ is smooth for a general $f\in\mathrm{End}_q(\mathbf{P}^n)$.
\item If $n\geq 4$, then $R_f$ is singular for every $f\in\mathrm{End}_q(\mathbf{P}^n)$.
\item For every $n\geq1$, the pair $(\mathbf{P}^n,R_f)$ is log canonical for a general $f\in\mathrm{End}_q(\mathbf{P}^n)$.
\end{enumerate}
\end{theorem}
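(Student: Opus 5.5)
Throughout we write $f=[F_0:\dots:F_n]$ with $F_i$ homogeneous of degree $q$ and no common zero, so that $R_f=\{J=0\}$ with $J=\det(\partial F_i/\partial x_j)$, a form of degree $(n+1)(q-1)$. The corank stratification of the Jacobian matrix $DF(x)$ is the central tool. Let $\Sigma_k\subset\mathbf{P}^n$ be the locus where $\operatorname{corank}DF(x)\ge k$. Near a point of corank exactly $k$, the determinant vanishes to order exactly $k$ generically, since it is locally the determinant of a $k\times k$ block after row and column reduction. In particular, $R_f$ is singular along $\Sigma_2$ (for $k\ge2$, $J\in\mathfrak m^k$), and it is smooth at corank-one points $x$ where the derivative of the $1\times1$ block is nonzero. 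Hence all three parts reduce to dimension counts for the strata.

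\textbf{Step 1 (incidence dimension count).} I would consider the incidence variety
$$I_k=\{(f,x):\operatorname{corank}DF(x)\ge k\}\subset \mathrm{End}_q(\mathbf{P}^n)\times\mathbf{P}^n.$$
For a fixed $x$, the map $f\mapsto DF(x)$ (the $(n+1)\times(n+1)$ matrix of partials at the affine cone point) is a linear surjection onto matrices compatible with Euler's relation $DF(x)\cdot x=qF(x)$. Since the $F_i$ have no common zero, $F(x)\neq0$, and surjectivity still holds. Determinantal varieties of corank $\ge k$ have codimension $k^2$, so the fibre of $I_k\to\mathbf{P}^n$ has codimension $k^2$. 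Therefore $\dim\Sigma_k(f)=n-k^2$ for general $f$, and $\Sigma_k=\emptyset$ when $k^2>n$.

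For part (1), with $n\le3$ we get $\Sigma_2=\emptyset$ for general $f$. I would then refine the count with one more jet condition: the locus of corank-one points where $J$ is singular, i.e.\ where the $1\times1$ reduced block has vanishing differential, imposes $n$ further independent conditions on the $2$-jet of $F$ at $x$ (these are second derivatives, free for $q\ge2$). This gives total codimension $1+n>n$, so $R_f$ is smooth for general $f$. Part (1) also needs the nonemptiness/irreducibility bookkeeping to be routine.

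\textbf{Step 2 (part (2)).} For every $f$ with $n\ge4$, I would show $\Sigma_2(f)\ne\emptyset$. The locus of corank $\ge2$ in the space of $(n+1)\times(n+1)$ matrices has codimension $4\le n$. The family $x\mapsto DF(x)$ is given by sections of $\mathcal O(q-1)^{\oplus (n+1)^2}$ on $\mathbf P^n$. By the Harris--Tu / Porteous formula, the degeneracy class of a map $\mathcal O^{n+1}\to\mathcal O(q-1)^{n+1}$ in codimension $4$ is a positive multiple of $H^4$: it is $\det\begin{pmatrix}c_2&c_3\\c_1&c_2\end{pmatrix}$ of $\mathcal O(q-1)^{n+1}$, which is positive since the bundle is ample. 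By Fulton--Lazarsfeld positivity of degeneracy loci for ample bundles, $\Sigma_2\neq\emptyset$ for every $f$. At those points $J$ has multiplicity $\ge2$, so $R_f$ is singular. This application of the nonemptiness theorem for ample $\mathrm{Hom}$ bundles is the key step; I would cite Fulton--Lazarsfeld directly.

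\textbf{Step 3 (part (3)).} For general $f$, I would bound multiplicities: $\operatorname{mult}_xR_f\le k$ on $\Sigma_k\setminus\Sigma_{k+1}$, which has codimension $k^2\ge k$. I would then aim to show that the lc condition holds, e.g.\ by proving that the local model of $J$ near a generic point of $\Sigma_k$ is analytically the generic $k\times k$ determinant $\det(y_{ij})$ in $k^2$ independent coordinates (times smooth factors). This is the main obstacle: one must show that the transversality from Step 1 (the $1$-jet map of $f$ to matrices being a submersion for general $f$) makes $x\mapsto DF(x)$ transverse to each determinantal stratum, which follows from a parametric transversality (Bertini/Sard) argument on $I_k$. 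Granting this, the pair $(\mathbf P^n,R_f)$ is locally a smooth pullback of $(\mathrm{Mat}_{k\times k},\{\det=0\})$. The generic determinantal hypersurface is log canonical, with lct $=1$: it is a GIT/toric-type computation, and the lct of the determinant equals $\min_j (k-j+1)^2/(k-j+1)\ge1$ via the standard resolution by complete collineations (Docampo). Log canonicity is preserved under smooth pullback, which concludes part (3).
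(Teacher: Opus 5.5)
Your proposal is correct. It shares the paper's basic inputs: the universal Jacobian map $(f,x)\mapsto[DF(x)]$ is a submersion, the corank-$\geq 2$ locus has codimension $4$, and the determinant hypersurface is log canonical by Docampo. Where it differs is in how each part passes from the universal family to an individual $f$.

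In (1), the paper uses $\Sing(R_{\mathcal F})=J^{-1}(\Sigma)$ together with generic smoothness of $R_{\mathcal F}\to B$. You instead bound the bad incidence locus directly by a $2$-jet count. When you write that count out, note two things. First, the radial derivative of $J$ vanishes on $R_f$ by Euler. Second, the other $n$ conditions are independent because the kernel vector of $DF(x)$ is not proportional to $x$ (since $DF(x)x=qF(x)\neq0$), so the free coefficients of $x_jx_lx_n^{q-2}$ genuinely enter.

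In (2), the paper applies the projective dimension theorem to $J_f(\mathbf{P}^n)\cap\Sigma$ inside $M$. Your Fulton--Lazarsfeld argument is the bundle-theoretic form of the same fact. The class you want is the Thom--Porteous one (Harris--Tu treats symmetric maps), namely $\frac{n(n+1)^2(n+2)}{12}(q-1)^4H^4\neq0$.

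In (3), the paper avoids any stratum-by-stratum analysis: $(M,D)$ lc gives $(\mathfrak{X},R_{\mathcal F})$ lc by smooth pullback, and Bertini for pairs then cuts down to a general fibre. Your route costs a parametric transversality step (generic smoothness of each $J^{-1}(M_r)\to B$, with $M_r$ the rank strata of $M$). In return it gives a local normal form. For general $f$ and every $x$ of corank $k$, the Schur complement map $x\mapsto S(DF(x))\in\mathrm{Mat}_{k\times k}$ is a submersion, so $R_f$ is locally a smooth pullback of the generic $k\times k$ determinant. This yields $\mathrm{mult}_xR_f=\mathrm{corank}\,DF(x)$. For $n\geq 4$ it also shows $\Sing(R_f)=\Sigma_2(f)$ has dimension exactly $n-4$, complementing the paper's bound $\dim\Sing(R_f)\geq n-4$.

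There are two small corrections.
\begin{enumerate}
\item The local model must be established at every point of $\Sigma_k\setminus\Sigma_{k+1}$, not just at generic ones, since log canonicity is checked pointwise. Transversality to the stratum through each point does give this. The preliminary bound ``$\mathrm{mult}\leq k$ in codimension $k^2$'' would not imply lc anyway, so you can drop it.
\item Euler's relation does not constrain the image of $F\mapsto DF(x)$; that image is all of $\mathrm{Mat}_{n+1}$. The condition $F(x)\neq0$ only restricts to the open set $\{A: Ax\neq0\}$. Also, the incidence count gives $\dim\Sigma_k(f)\leq n-k^2$, not equality.
\end{enumerate}
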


Theorem~\ref{thm: ramification-dimension-dichotomy}(1) and (3) motivate a finer study of the singularities of ramification pairs associated with special endomorphisms, while Theorem~\ref{thm: ramification-dimension-dichotomy}(2) raises the question of how singular their ramification divisors must be in higher dimensions.
\begin{samepage}
This question is closely related to the following conjecture of Yoshinori Gongyo (cf. \cite[Conjecture~1.2]{BG17} and \cite[Conjecture~1.3]{Meng23}):

\begin{conjecture}[Gongyo]\label{conj: gongyo}
Let $f: X\to X$ be a $q$-polarized endomorphism of a smooth projective variety $X$. Then $\left(X,\frac{R_f}{q-1}\right)$ is log canonical after replacing $f$ by a suitable iterate.
\end{conjecture}
\end{samepage}

The second-named author proved Conjecture~\ref{conj: gongyo} for smooth projective surfaces except for the situation where $X\cong\mathbf{P}^2$ and $f$ admits no totally periodic curve (\cite[\S~4]{Meng23}). W.~Chang and D.-Q.~Zhang subsequently established the analogous result for normal projective surfaces \cite{CZ}, meeting similar difficulties. Our next theorem settles the remaining case $X=\mathbf{P}^2$, and hence completes the proof of Conjecture~\ref{conj: gongyo} for smooth projective surfaces. A pair $(X,\Delta)$ is said to be \emph{log Calabi--Yau} if it is log canonical and $K_X+\Delta\sim_{\mathbb{Q}}0$.

\begin{theorem}\label{thm: main projective plane}
Let $f: \mathbf{P}^2 \to \mathbf{P}^2$ be a $q$-polarized endomorphism. Then there exists an integer $s\geq1$ such that $\left(\mathbf{P}^2,\frac{R_{f^s}}{q^s-1}\right)$ is log canonical. In particular, it is a log Calabi--Yau pair.
\end{theorem}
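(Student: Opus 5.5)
We want to show that for some $s\geq1$ the pair $\left(\mathbf{P}^2,\frac{R_{f^s}}{q^s-1}\right)$ is log canonical. The log Calabi--Yau statement then follows from the ramification formula: $K_{\mathbf{P}^2}=(f^s)^*K_{\mathbf{P}^2}+R_{f^s}$ together with $(f^s)^*K\sim q^sK$ gives $R_{f^s}\sim -(q^s-1)K_{\mathbf{P}^2}$, so $K_{\mathbf{P}^2}+R_{f^s}/(q^s-1)\sim_{\mathbb{Q}}0$.

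Since we are on a smooth surface, log canonicity is local at points, and it can be tested by valuations. The plan is to work with the valuative tree at each point $x$. We need, for every divisorial (or more generally quasimonomial) valuation $v$ centred at $x$,
$$v(R_{f^s})\leq (q^s-1)A(v),$$
where $A$ is the log discrepancy. The chain rule for Jacobians gives
$$R_{f^s}=\sum_{i=0}^{s-1}(f^i)^*R_f .$$
Write $f_\bullet$ for the induced map on valuations, $v\mapsto$ the normalized pushforward, carrying the local degree $e_f(v)$. Then:
\begin{enumerate}[label=(\roman*)]
\item $v((f^i)^*D)=e_{f^i}(v)\,(f^i_\bullet v)(D)$;
\item the Jacobian formula $A(f_\bullet v)\,e_f(v)=A(v)+v(R_f)$, i.e. $A(v)+v(R_f)=e_f(v)A(f_\bullet v)$.
\end{enumerate}
Iterating (ii) yields the telescoping identity
$$A(v)+v(R_{f^s})=e_{f^s}(v)\,A(f^s_\bullet v).$$
So the required inequality is equivalent to
$$e_{f^s}(v)\,A(f^s_\bullet v)\leq q^s A(v)$$
for all $v$. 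This is a statement about the asymptotic growth of local degrees versus log discrepancy along orbits in the valuation space.

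The key steps, in order, are as follows.

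First, reduce to points $x$ whose $f$-orbit meets the locus where local degrees are large. By \cite[\S4]{Meng23}, only the case with no totally periodic curve remains. Using the earlier parts of the paper (e.g. Theorem~\ref{thm: ramification-dimension-dichotomy}-type local analysis), one argues that the bad points lie in a finite set: the points that are totally invariant of high multiplicity, or periodic points in $\Sing(R_f)$ with large Jacobian order. After replacing $f$ by an iterate, all such points become fixed and, where relevant, totally invariant.

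Second, at a fixed point $x$, study the germ $f:(\mathbf{C}^2,x)\to(\mathbf{C}^2,x)$. We use the theory of Favre--Jonsson for superattracting germs: the eigenvaluation and the asymptotic attraction rate $c_\infty=\lim c(f^n)^{1/n}$. The local degree satisfies $e_{f^n}(v)\leq$ the topological degree of the germ, and since $f$ is globally of degree $q^2$, the local topological degree is at most $q^2$. We combine this with the bound $A(f_\bullet v)\leq$ (something controlled by $A(v)$ and the attraction rate) to show the ratio stays $\leq q^s$. The natural approach is to use the eigenvaluation $v_*$ with $f_\bullet v_*=v_*$: there $e_{f^s}(v_*)=\lambda^s$, where $\lambda\leq q$ should follow from the polarization via intersection with a line, i.e. $v_*$-growth bounded by degree growth $q$. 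Then compactness of the tree and a contraction argument handle the other valuations after a further iterate.

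The main obstacle is the second step: bounding $e_{f^s}(v)A(f^s_\bullet v)/A(v)$ uniformly by $q^s$. The local topological degree can be as large as $q^2$, so the naive bound fails. The saving must come from the fact that a large local degree forces $f_\bullet$ to decrease $A$ (valuations get pushed toward the monomial/root end). I would try first to prove the one-step inequality $e_f(v)A(f_\bullet v)\leq q\,A(v)$ for valuations near the eigenvaluation, via the formula for $A$ on the tree in terms of skewness and thinness, and then use that iterates of $f_\bullet$ converge to $v_*$ to conclude that the inequality holds for all $v$ once $s$ is large. Optimality examples (where $s=1$ fails) suggest that genuinely only an asymptotic statement is available, so the iterate is essential.
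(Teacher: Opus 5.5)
There is a genuine gap. Your reformulation is correct and is also what the paper uses: log canonicity is equivalent to $A((f^s)_*v)\le q^sA(v)$ for every divisorial $v$, via $A(v)+v(R_{f^s})=A((f^s)_*v)$. Note that the factor in your (ii) is the attraction cocycle $c(f,v)=v(f^*\mathfrak{m})$, not a local degree. However, neither of your two key steps is actually carried out.

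\textbf{The reduction to finitely many points.} This step is unjustified. Theorem~\ref{thm: ramification-dimension-dichotomy} concerns a \emph{general} $f$ and says nothing about a given one. Your description of the bad set (``periodic points of $\Sing(R_f)$ with large Jacobian order'') is neither shown to be finite nor accompanied by any control at the remaining points. What is needed is a bound uniform in $x$, so that a single $s$ works everywhere off the bad set. This is Favre--Jonsson's estimate $\mathrm{ord}_x(R_{f^s})\le C\rho^s$ with $\rho<q$ for all $x\notin\mathcal{E}_1(f)\cup\mathcal{E}_2(f)$ (Proposition~\ref{prop: FJ-exceptional}(iii)). Combined with $\mathrm{lct}_x\ge 1/\mathrm{ord}_x$ (Lemma~\ref{lem: classical izumi type inequality}), it makes the pair klt off the finite, totally invariant set $\mathcal{E}_2(f)$.

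\textbf{The local step at $p\in\mathcal{E}_2(f)$.} This is the more serious gap. At such a point (fixed, totally invariant, superattracting, $d(f)=q^2=c_\infty(f)^2$), the inequality $\sup_v A((f^s)_*v)/(q^sA(v))\le 1$ is exactly the statement to be proved. A one-step inequality near $v_*$ plus a contraction argument cannot deliver it, for several reasons.
\begin{enumerate}
\item If $v_*$ is quasi-monomial, then $f_*v_*=qv_*$, so $\Phi(v):=A(f_*v)/(qA(v))$ equals $1$ at $v_*$. Thus $v_*$ is an lc place with no slack. Nothing absorbs a factor $\prod_{i<N}\Phi(f^i_\bullet v)>1$ accumulated before an orbit arrives near $v_*$, and $N$ is not bounded uniformly in $v$. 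Tree compactness does not help, since $A$ is only lower semicontinuous.
\item Orbits need not converge to $v_*$. In Gignac--Ruggiero's alternative (iii), $f^2_\bullet$ fixes a whole segment pointwise.
\item A priori $v_*$ may be an end of $\mathcal{V}$, for instance a curve valuation with $A=+\infty$, so there is nothing to linearise around.
\end{enumerate}

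\textbf{The missing idea: reduction to dimension one.} The paper proceeds as follows.
\begin{enumerate}
\item Theorem~\ref{thm: balanced-local} shows that $f^2$ has a \emph{divisorial} eigenvaluation. Ends are excluded by the volume inequality of Lemma~\ref{lem: local-volume} together with $\mathrm{vol}=\alpha^{-1}$. Irrational eigenvaluations are excluded by integrality of the monomial matrix.
\item This valuation is extracted equivariantly by the normalized blow-up of a simple complete ideal having it as unique Rees valuation (Corollary~\ref{cor: equivariant-extraction-surface}). This gives a totally invariant exceptional curve $E$ with $g^*E=qE$.
\item $(Y,E)$ is lc because $E$ is totally invariant under a polarized endomorphism.
\item Adjunction to $E$ turns the problem into one for a $q$-polarized endomorphism of a curve with boundary, where the known one-dimensional result supplies the iterate.
\item Inversion of adjunction and the crepant identity \eqref{eq: crepant-identity} then give log canonicity near $p$.
\end{enumerate}
This controls all valuations over $p$ simultaneously, which is exactly what your contraction argument cannot do.
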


The conclusion of Theorem~\ref{thm: main projective plane} cannot in general be strengthened to klt. Indeed, for the power map $f([x:y:z])=[x^q:y^q:z^q]$, one has $\frac{R_{f^s}}{q^s-1}=\{xyz=0\}$, so the normalized ramification pair is log canonical but not klt for every $s\geq1$. The passage to an iterate in Theorem~\ref{thm: main projective plane} is also necessary, as shown by \cite[Example~1.5]{LZ26}. Nevertheless, the first-named author and D.-Q.~Zhang confirmed Conjecture~\ref{conj: gongyo} for Galois self-covers of $\mathbf{P}^n$ in arbitrary dimensions, without passing to an iterate \cite[Theorem~1.4]{LZ26}.

We also record the part of the surface argument that extends to arbitrary dimension. For a $q$-polarized endomorphism $f:\mathbf{P}^n\to\mathbf{P}^n$, define, following \cite[Theorem~4.3]{Par11},
$$ \mathcal{E}_{\mathrm{Jac}}(f):=\bigcup_{\delta>0}\bigcap_{s\geq1}\left\{x\in\mathbf{P}^n:\mathrm{ord}_x(R_{f^s})\geq\delta q^s\right\}. $$
This set is totally invariant, meaning that $f^{-1}(\mathcal{E}_{\mathrm{Jac}}(f))=\mathcal{E}_{\mathrm{Jac}}(f)$.

\begin{theorem}\label{thm: fixed Jacobian exceptional set}
Let $f:\mathbf{P}^n\to\mathbf{P}^n$ be a $q$-polarized endomorphism, where $q>1$. The set $\mathcal{E}_{\mathrm{Jac}}(f)$ is a proper algebraic subset.
For every sufficiently large $s$, the pair $(\mathbf{P}^n,R_{f^s}/(q^s-1))$ is klt on the fixed open subset $\mathbf{P}^n\setminus\mathcal{E}_{\mathrm{Jac}}(f)$.
\end{theorem}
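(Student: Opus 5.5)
Our approach is to combine the chain rule for ramification divisors with a valuative, Favre--Jonsson/Parra-type analysis of the multiplicities $\mathrm{ord}_x(R_{f^s})$, in the spirit of \cite[Theorem~4.3]{Par11}.

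\textbf{Chain rule and subadditivity.} From $R_{f^{s+t}}=R_{f^t}+(f^t)^*R_{f^s}$ we get, for every point $x$,
$$\mathrm{ord}_x(R_{f^{s+t}})\le \mathrm{ord}_x(R_{f^t})+e_{f^t}(x)\,\mathrm{ord}_{f^t(x)}(R_{f^s}),$$
where $e_{f^t}(x)$ is the local topological degree. More intrinsically, for any divisorial valuation $v$ centered at $x$, set $\alpha_s(v)=v(R_{f^s})$. Then
$$\alpha_{s+t}(v)=\alpha_t(v)+\alpha_s(f^t_*v),$$
and $f^t_*v$ is again a valuation, with its normalization governed by the multiplier $c(f^t,v)$. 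Our plan is to normalize by $q^s$ and show that $\alpha_s(v)/q^s$ converges, uniformly on the valuative space over each point, to a limit $\theta(v)$. Uniformity comes from $\alpha_1\le C\cdot\mathrm{ord}$ together with a geometric series in $1/q$ once local degrees are controlled. This is the analogue of the Lyapunov/Jacobian exceptional set of Parra. The set $\{x:\sup_{v\text{ centered at }x}\theta(v)/A(v)>0\}$, with $A$ the log discrepancy, then coincides with $\mathcal{E}_{\mathrm{Jac}}(f)$ up to the appropriate comparison of $\mathrm{ord}_x$ with the relevant sup over valuations.

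\textbf{Algebraicity.} Each set $Z_{s,\delta}=\{\mathrm{ord}_x(R_{f^s})\ge\delta q^s\}$ is Zariski closed, so $\bigcap_s Z_{s,\delta}$ is closed. These closed sets increase as $\delta\downarrow0$. We will show the union stabilizes, i.e.\ that there is $\delta_0>0$ such that $\theta(x)>0$ forces $\theta(x)\ge\delta_0$. For this we use total invariance: $f^{-1}(E_\delta)\subseteq E_{\delta/C}$ and $f(E_\delta)\subseteq E_{\delta'}$. We then run a Noetherian induction on the irreducible components of the closed sets $E_\delta$. Their number and degree are bounded because $\deg R_{f^s}=(n+1)(q^s-1)$ and a totally invariant closed set of $\mathbf{P}^n$ has bounded degree via $f^*$ acting on cycles. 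Properness follows since the generic point $x$ has $\mathrm{ord}_x R_{f^s}=0$.

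\textbf{klt outside $\mathcal{E}_{\mathrm{Jac}}(f)$.} Fix $x\notin\mathcal{E}_{\mathrm{Jac}}$. We need $v(R_{f^s})<(q^s-1)A(v)$ for every divisorial $v$ centered near $x$, uniformly for large $s$. We compare with $\mathrm{ord}$ via Izumi/Skoda-type bounds $v(D)\le A(v)\,\mathrm{ord}_{c_X(v)}(D)\cdot n$. This reduces the problem to showing $\mathrm{ord}_y(R_{f^s})/q^s\to0$ uniformly for $y$ in a neighborhood of $x$, which we get by upper semicontinuity and compactness of $\mathbf{P}^n\setminus U$ for a neighborhood $U$ of the closed set $\mathcal{E}_{\mathrm{Jac}}$. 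The danger is that the Skoda constant $n$ costs us, so we instead want the sharper valuative statement $\sup_v v(R_{f^s})/(q^sA(v))\to0$. We get it from the cocycle relation above together with the fact that $A(f_*v)\le A(v)\cdot$(bounded) while the contribution shrinks by $q^{-1}$ at each step.

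\textbf{Main obstacle.} The hardest step is the stabilization in $\delta$ and the uniform decay, i.e.\ ruling out points where $\mathrm{ord}_x(R_{f^s})$ grows like $q^s$ with a rate tending to zero. I would first attack this by proving a dichotomy: along an orbit, either the local degrees $e_{f^s}(x)$ grow like $q^{ns}$-type on a totally invariant set, giving a positive rate, or they grow subexponentially relative to $q^s$, giving $o(q^s)$. This would follow a Favre--Jonsson argument on the local degree growth.
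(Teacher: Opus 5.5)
Your proposal correctly locates the two hard points, but it resolves neither, and the mechanisms you sketch would not work as stated.

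\textbf{Algebraicity.} $\mathcal{E}_{\mathrm{Jac}}(f)=\bigcup_{\delta>0}E_\delta$ is a countable increasing union of Zariski closed sets, so you need stabilization in $\delta$. The inclusions $f^{-1}(E_\delta)\subseteq E_{\delta/C}$ and $f(E_\delta)\subseteq E_{\delta'}$ do not make any fixed $E_\delta$ totally invariant. The degree bound for totally invariant closed sets that you invoke is therefore neither justified nor applicable, and no Noetherian induction is actually set up.

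\textbf{Uniform decay.} This is the more serious gap. The klt assertion needs one threshold $s_0$ valid on the whole fixed open set $\mathbf{P}^n\setminus\mathcal{E}_{\mathrm{Jac}}(f)$. That is, it needs a uniform gap $\mathrm{ord}_x(R_{f^s})\le C\rho^s$ with $\rho<q$ (or at least $o(q^s)$ uniformly) for \emph{every} $x\notin\mathcal{E}_{\mathrm{Jac}}(f)$.
\begin{itemize}
\item Upper semicontinuity holds for each fixed $s$ separately. Since $\mathrm{ord}_y(R_{f^s})$ depends on the orbit segment $y,\dots,f^{s-1}(y)$, which can pass arbitrarily close to $\mathcal{E}_{\mathrm{Jac}}(f)$, the neighborhoods on which it is small can shrink with $s$.
\item Compactness of $\mathbf{P}^n\setminus U$ then gives only a threshold depending on the neighborhood $U$, not a single $s_0$ on the fixed open set.
\item Your additive chain-rule bound carries the local degree $e_{f^t}(x)$, which can be as large as $q^{nt}$. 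It is too weak to detect growth rate $q$.
\item Iterating $A(f_*v)\le K\,A(v)$ gives only growth $K^s$. The claim that ``the contribution shrinks by $q^{-1}$ at each step'' is unproved.
\end{itemize}
The ``dichotomy'' you defer to the end is precisely the missing theorem.

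\textbf{What the paper does.} It does not reprove these facts. It uses Parra's Jacobian cocycle $\mu_s(x)=n+\mathrm{ord}_x(R_{f^s})$, which is Zariski upper semicontinuous and multiplicatively submultiplicative: $\mu_{s+t}(x)\le\mu_s(x)\mu_t(f^s(x))$. It then cites:
\begin{itemize}
\item \cite[Theorem~4.3 and Corollary~4.5]{Par11} for algebraicity and total invariance of $\mathcal{E}_{\mathrm{Jac}}(f)=\{\mu_\infty=q\}$;
\item \cite[Theorem~4.7]{Par11} for the uniform bound $\mu_s(x)\le C\rho^s$ off $\mathcal{E}_{\mathrm{Jac}}(f)$ (for $n=2$ this is \cite[Proposition~6.2]{FJ03}).
\end{itemize}
Properness is as you say, via $\mathcal{E}_{\mathrm{Jac}}(f)\subseteq\mathrm{Supp}(R_f)$.

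Given the uniform bound, klt follows in one line from Lemma~\ref{lem: classical izumi type inequality}: $\mathrm{lct}_x(\mathbf{P}^n;R_{f^s}/(q^s-1))\ge(q^s-1)/(C\rho^s)>1$ for all large $s$, uniformly in $x$. So the valuative limit $\theta(v)$ is an unneeded detour. Your concern about a Skoda constant is also misplaced. The sharp inequality $v(D)\le A(v)\,\mathrm{ord}_x(D)$, for $v$ whose center passes through $x$, has no factor $n$, and any fixed constant would be absorbed by $\rho<q$ anyway. What is indispensable is the uniform exponential gap on the entire complement, which you must either cite or actually prove.
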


When $n=2$, one has $\mathcal{E}_{\mathrm{Jac}}(f)=\mathcal{E}_1(f)\cup\mathcal{E}_2(f)$ by \cite[Definition~3.15, Proposition~3.13, Theorem~4.1, and Proposition~5.1]{FJ03}, while \cite[Proposition~6.2]{FJ03} gives the uniform estimate used in the proof on its complement. Thus Theorem~\ref{thm: fixed Jacobian exceptional set} recovers the fixed-locus estimate used in the surface argument, while the valuative-tree method further distinguishes the exceptional curves from the exceptional points.
Since $\mathcal{E}_2(f)$ is finite and totally invariant, each of its points becomes totally invariant under a common iterate. Hence a recent result of W.~Chang and the first-named author \cite[Theorem~1.3]{CL26} gives the sharp bound $|\mathcal{E}_2(f)|\leq3$.

\medskip

We conclude this section by briefly explaining the main idea of the proof of Theorem~\ref{thm: main projective plane}.

\noindent\textbf{Sketch of the proof of Theorem~\ref{thm: main projective plane}.} Let $\mathcal{E}:=\mathcal{E}_1(f)\cup\mathcal{E}_2(f)$ be the exceptional set introduced in Definition~\ref{def: FJ-exceptional-sets}; roughly speaking, it is the locus where the local ramification multiplicities of the iterates of $f$ may have maximal growth. Put $U:=\mathbf{P}^2\setminus\mathcal{E}$. By \cite[Proposition~6.2]{FJ03}, recorded here as Proposition~\ref{prop: FJ-exceptional}(iii), there exist constants $C=C(f)>0$ and $0\leq\rho<q$ such that $\mathrm{ord}_x(R_{f^s})\leq C\rho^s$ for every $x\in U$ and every $s\geq1$. Together with the classical Izumi-type inequality, this shows that $(\mathbf{P}^2,\frac{R_{f^s}}{q^s-1})$ is klt on $U$ for all sufficiently large $s$. If $\mathcal{E}_1(f)\neq\varnothing$, the theorem follows from \cite[\S~4]{Meng23}; hence we may assume that $\mathcal{E}_1(f)=\varnothing$ and $\mathcal{E}=\mathcal{E}_2(f)$ is finite. 
After replacing $f$ by an iterate, we may assume that every point $x\in\mathcal{E}_2(f)$ is fixed and totally invariant, and the corresponding germ at $x$ is superattracting by \cite{FJ03}. Theorem~\ref{thm: balanced-local} then produces a primitive divisorial eigenvaluation $v_x$ for the common second iterate $f^2$. The assumption that the ambient variety is a surface is crucial at this point. The normalized Rees construction yields an equivariant birational model $\pi_x:Y_x\to\mathbf{P}^2$ extracting $v_x$, namely $v_x=\mathrm{ord}_{E_x}$ for a totally invariant exceptional prime divisor $E_x\subset Y_x$. Adjunction to $E_x$, followed by inversion of adjunction, gives local log canonicity near $x$. Taking a common multiple of the resulting iteration indices and combining these local conclusions with the estimate on $U$ completes the proof.

\vspace{2mm}

{\bf Acknowledgements.}
The second author is supported by the Science and Technology Commission of Shanghai Municipality (No. 22DZ2229014), the National Natural Science Foundation of China, and 
the Shanghai Pilot Program for Basic Research. 

\medskip

\section{Preliminaries}

We adopt the standard notation as in \cite{Ful}, \cite{Har}, \cite{Laz} and \cite{Mum74}.

\subsection{Valuations}

We recall the terminology from \cite[Section~2.1]{LLX}; see also \cite{ZS,JM12,BdFFU15}.

\begin{definition}\label{def: real-valuation}
Let $X$ be a reduced irreducible variety over $\mathbb{C}$, and let $K(X)$ be its function field. A \emph{real valuation} of $K(X)$ is a nonconstant map
$$ v: K(X)^*\longrightarrow\mathbb{R} $$
which is trivial on $\mathbb{C}^*$ and satisfies
\begin{enumerate}
    \item $v(fg)=v(f)+v(g)$,
    \item $v(f+g)\geq\min\{v(f),v(g)\}$.
\end{enumerate}
We set $v(0)=+\infty$. Its valuation ring and maximal ideal are
$$ \mathcal{O}_v:=\{f\in K(X):v(f)\geq 0\}, \qquad \mathfrak{m}_v:=\{f\in K(X):v(f)>0\}. $$
We say that $v$ is \emph{centered} at a scheme-theoretic point $\xi\in X$ if $\mathcal{O}_{X,\xi}\hookrightarrow\mathcal{O}_v$ is a local inclusion, and write $c_X(v)=\xi$. We denote by $\mathrm{Val}_X$ the set of real valuations admitting a center on $X$, and by $\mathrm{Val}_{X,x}$ the set of those centered at a closed point $x\in X$. For $v\in\mathrm{Val}_{X,x}$ and $t\geq 0$, its valuation ideal is $\mathfrak{a}_t(v):=\{f\in\mathcal{O}_{X,x}:v(f)\geq t\}.$ If $\mathfrak{a}\subseteq\mathcal{O}_{X,x}$ is a nonzero ideal, we set $$ v(\mathfrak{a}):=\min\{v(f):0\neq f\in\mathfrak{a}\}. $$
\end{definition}

\begin{definition}\label{def: divisorial-valuation}
Let $\pi: Y\to X$ be a proper birational morphism with $Y$ normal, and let $E\subset Y$ be a prime divisor. The order of vanishing along $E$ defines a valuation $\mathrm{ord}_E\in\mathrm{Val}_X$, whose center on $X$ is the generic point of $\pi(E)$. A valuation $v\in\mathrm{Val}_X$ is \emph{divisorial} if
$v=c\cdot\mathrm{ord}_E$ for some such $E$ and some $c\in\mathbb{R}_{>0}$.
For a smooth closed point $x\in X$ with maximal ideal $\mathfrak{m}_x$, we denote by $\mathrm{ord}_x$ the \emph{multiplicity valuation} defined by $\mathrm{ord}_x(f):=\max\{k\in\mathbb{Z}_{\geq 0}:f\in\mathfrak{m}_x^k\}$ for $0\neq f\in\mathcal{O}_{X,x}$. We set $\mathrm{ord}_x(0):=+\infty$ and extend $\mathrm{ord}_x$ to $K(X)^*$ by taking quotients.
\end{definition}

\begin{definition}\label{def: quasi-monomial-valuation}
A log-smooth model over $X$ consists of a proper birational morphism $\pi: Y\to X$, with $Y$ smooth, together with a reduced simple normal crossing divisor $E_Y=\sum_{k=1}^N E_k$ containing the exceptional locus of $\pi$. Let $\eta$ be the generic point of a codimension-$r$ stratum of $E_Y$, and let $E_{i_1},\ldots,E_{i_r}$ be the components of $E_Y$ containing $\eta$. Choose regular parameters $y_1,\ldots,y_r$ in $\mathcal{O}_{Y,\eta}$ such that $E_{i_j}=(y_j=0)$. Given weights $\alpha=(\alpha_1,\ldots,\alpha_r)\in\mathbb{R}_{>0}^r$ and a nonzero element $f\in\mathcal{O}_{Y,\eta}$, write in the completed local ring
$$ f=\sum_{\beta\in\mathbb{Z}_{\geq 0}^r}c_\beta y^\beta, \qquad y^\beta:=y_1^{\beta_1}\cdots y_r^{\beta_r}, $$
where each $c_\beta\in\widehat{\mathcal{O}}_{Y,\eta}$ is zero or a unit. The associated monomial valuation is defined by
$$ v_\alpha(f):=\min\{\langle\alpha,\beta\rangle:c_\beta\neq 0\}. $$
It extends to $K(X)^*$ by taking quotients. A valuation $v\in\mathrm{Val}_X$ is \emph{quasi-monomial} if $v=v_\alpha$ for some data as above.
\end{definition}

By \cite{ELS03}, a valuation $v\in\mathrm{Val}_X$ is quasi-monomial if and only if it satisfies Abhyankar's equality. Every divisorial valuation is quasi-monomial. At a smooth point $x$ of a surface, the space $\mathrm{Val}_{X,x}$ can be described, after normalizing by $v(\mathfrak{m}_x)=1$ and allowing centered semivaluations, in terms of the valuative tree introduced in Subsection~\ref{subsec: valuative tree}; see \cite{FJ04}. In higher dimensions, valuation spaces can be described as inverse limits of dual complexes \cite{JM12,BdFFU15}.

\subsection{Log canonical singularities and Izumi-type inequalities}

We start with a normal projective variety $X$ together with an effective Weil $\mathbb{Q}$-divisor $\Delta$ on $X$. $(X,\Delta)$ is called a pair if $K_X+\Delta$ is a $\mathbb{Q}$-Cartier divisor (see \cite{KM98}).

We first introduce the log discrepancies of quasi-monomial valuations following \cite[Definition~2.2]{LLX}.

\begin{definition}\label{defn: log discrepancy quasi-monomial valuations}
Let $E$ be a prime divisor on a normal variety $Y$ admitting a proper birational morphism $\pi: Y\to X$, and write $v_E=\mathrm{ord}_E$. The log discrepancy of $v_E$ with respect to $(X,\Delta)$ is
$$ A_{(X,\Delta)}(v_E):=1+\mathrm{ord}_E\bigl(K_Y-\pi^*(K_X+\Delta)\bigr). $$
For $c\in\mathbb{R}_{>0}$, we set $A_{(X,\Delta)}(c v_E):=cA_{(X,\Delta)}(v_E)$.

More generally, choose a representation of a quasi-monomial valuation $v_\alpha$ as in Definition~\ref{def: quasi-monomial-valuation} for which $E_Y$ also contains the strict transform of $\mathrm{Supp}(\Delta)$; such a representation exists after passing to a higher log-smooth model. If $E_{i_1},\ldots,E_{i_r}$ are the components containing $\eta$, we set
$$ A_{(X,\Delta)}(v_\alpha):=\sum_{j=1}^r\alpha_j A_{(X,\Delta)}(v_{E_{i_j}}). $$
This gives a well-defined, positively homogeneous function on the set of quasi-monomial valuations; see \cite{JM12,BdFFU15}. We write $A_X(v):=A_{(X,0)}(v)$ and $A_{(X,\Delta)}(E):=A_{(X,\Delta)}(v_E)$. If $X$ is $\mathbb{Q}$-factorial, then it follows from the definition that $$ A_{(X,\Delta)}(v)=A_{X}(v)-v(\Delta). $$
\end{definition}

For a klt pair, the log discrepancy function extends canonically from quasi-monomial valuations to all real valuations; see \cite[Definition~2.2(c)]{LLX}. In what follows, we only use log discrepancies of quasi-monomial valuations.

\medskip

We next recall \emph{log canonical} (lc) and \emph{Kawamata log terminal} (klt) singularities; see \cite[Definition~2.34]{KM98}.

\begin{definition}\label{defn: singularities and log discrepancies}
Let $(X,\Delta)$ be a pair, and let $\pi: Y\to X$ be a log resolution of $(X,\Delta)$ \cite{Hir64a,Hir64b}. The pair $(X,\Delta)$ is lc if $A_{(X,\Delta)}(v_E)\geq 0$ for every prime divisor $E$ on $Y$, and it is klt if $A_{(X,\Delta)}(v_E)>0$ for every such $E$. Equivalently, these inequalities hold for every divisorial valuation over $X$; hence the conditions are independent of the chosen log resolution.
\end{definition}

Suppose that $(X,\Delta)$ is log canonical on a neighborhood of a closed point $x\in X$, and let $D$ be a nonzero effective $\mathbb{Q}$-Cartier $\mathbb{Q}$-divisor defined near $x$. The \emph{log canonical threshold} of $D$ with respect to $(X,\Delta)$ at $x$ is denoted by
$$ \mathrm{lct}_x(X,\Delta;D):=\sup\{t\in\mathbb{R}_{\geq 0}:(X,\Delta+tD)\text{ is log canonical on a neighborhood of }x\}. $$
If $x\notin\mathrm{Supp}(D)$, then $\mathrm{lct}_x(X,\Delta;D)=+\infty$.
Throughout this paper, we only use the case $\Delta=0$, and for simplicity write $\mathrm{lct}_x(X;D):=\mathrm{lct}_x(X,0;D)$.

\begin{remark}\label{rem: equiv of all valuations}
Since log discrepancy is linear on every monomial cone and every divisorial valuation is quasi-monomial, a pair $(X,\Delta)$ is lc (resp. klt) if and only if $A_{(X,\Delta)}(v)\geq 0$ (resp. $>0$) for every quasi-monomial valuation $v$.
\end{remark}

\medskip

We recall the following classical Izumi-type inequality.

\begin{lemma}[{\cite[Proposition~9.5.13]{Laz04}}]\label{lem: classical izumi type inequality}
Let $x\in X$ be a smooth point, and let $D$ be an effective divisor on $X$. Then
$$ \mathrm{lct}_x(X;D)\geq \frac{1}{\mathrm{ord}_x(D)}. $$
Here $\mathrm{ord}_x$ is the multiplicity valuation from Definition~\ref{def: divisorial-valuation}.
\end{lemma}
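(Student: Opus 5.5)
The statement is the classical Izumi-type bound $\mathrm{lct}_x(X;D)\geq 1/\mathrm{ord}_x(D)$ at a smooth point $x$. I will prove it through the valuative criterion in Remark~\ref{rem: equiv of all valuations}. Set $m:=\mathrm{ord}_x(D)$. If $m=0$ then $x\notin\mathrm{Supp}(D)$ and the threshold is $+\infty$, so assume $m\geq1$. It suffices to show that $(X,tD)$ is log canonical near $x$ for $t=1/m$. Since $X$ is smooth, hence $\mathbb{Q}$-factorial, near $x$, this amounts to proving
$$ A_X(v)\geq t\,v(D) $$
for every divisorial valuation $v$ whose center contains $x$, using $A_{(X,tD)}(v)=A_X(v)-t\,v(D)$.

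\textbf{Valuations centered at $x$.} First let $v=\mathrm{ord}_E$ have center exactly $x$, and let $n=\dim X$. The two standard inputs are:
\begin{enumerate}
\item $v(D)\leq m\,v(\mathfrak{m}_x)$. This holds because a local equation $g$ of $D$ lies in $\mathfrak{m}_x^m$ but not in $\mathfrak{m}_x^{m+1}$. One way to see it is to restrict $g$ to a general line $L$ through $x$ on which $g$ has order exactly $m$. A cleaner route is to compare with the monomial valuation $v'$ in coordinates adapted to $v$. I expect this inequality to be the main point needing care.
\item $A_X(v)\geq n\,v(\mathfrak{m}_x)$. This is the standard discrepancy bound: blowing up $x$ gives $A_X(\mathrm{ord}_x)=n$, and the bound propagates to higher blowups because log discrepancy is superadditive along the sequence of point blowups realizing $E$. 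Equivalently, for a monomial valuation with weights $\alpha$ in coordinates at $x$, one has $A=\sum\alpha_i\geq n\min\alpha_i=n\,v(\mathfrak{m}_x)$, and this extends to general quasi-monomial valuations by the Jonsson--Mustaţă monotonicity of $A$ under retractions.
\end{enumerate}
Combining the two gives $A_X(v)\geq n\,v(\mathfrak{m}_x)\geq v(\mathfrak{m}_x)\geq v(D)/m$, which is the required inequality for $t=1/m$.

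\textbf{Inequality (1) for general $v$.} A valuation $v$ centered at $x$ need not be comparable to $\mathrm{ord}_x$ from above, so (1) must be argued directly. The claim is simply $v(g)\geq\ldots$ in the wrong direction, so I would instead prove $A_X(v)\geq v(g)/m$ by a different route: restrict to a curve. Choose a general smooth curve germ $C$ through $x$ with $(D\cdot C)_x=m$. By inversion of adjunction along a general hypersurface section, the pair $(X,tD)$ is lc near $x$ provided $(H,tD|_H)$ is lc for a general smooth hypersurface $H\ni x$; note $\mathrm{ord}_x(D|_H)=m$ for general $H$. Inducting on dimension reduces to a smooth curve, where $D|_C$ has multiplicity $m$ at $x$ and the pair is lc precisely when $t\leq 1/m$. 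This induction is the actual proof I would write. The main obstacle is justifying the inversion-of-adjunction step, namely that lc of $(H,tD|_H)$ implies lc of $(X,H+tD)$ near $H$, hence of $(X,tD)$. That is the Kollár--Shokurov connectedness theorem, which I would cite from \cite{KM98}.

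\textbf{Valuations whose center strictly contains $x$.} These are centered at generic points $\xi$ of subvarieties through $x$. They satisfy the same bound because $\mathrm{ord}_\xi(D)\leq\mathrm{ord}_x(D)$ by upper semicontinuity of multiplicity. This handles a neighborhood of $x$ and completes the proof.
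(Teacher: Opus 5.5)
Your induction is correct: cut by a general smooth hypersurface $H\ni x$ (so $\mathrm{ord}_x(D|_H)=m$, since a general linear form does not divide the leading form of a local equation of $D$), apply inversion of adjunction, and descend to a curve on which $D=m\,[x]$ near $x$. It is essentially the argument behind the cited \cite[Proposition~9.5.13]{Laz04}, which also cuts down by general hypersurfaces through $x$, using the restriction theorem for multiplier ideals.

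You should delete the opening valuative chain rather than leave it hedged. Inequality (1) is false, and the true inequality is the reverse one, $v(D)\geq m\,v(\mathfrak{m}_x)$, because a local equation of $D$ lies in $\mathfrak{m}_x^m$. For instance, $D=\{y=0\}\subset\mathbb{C}^2$ has $m=1$, while the monomial valuation with $v(x)=1$, $v(y)=2$ gives $v(D)=2>1=m\,v(\mathfrak{m}_x)$. So ``combining the two'' proves nothing. Your last paragraph, on valuations centered at points $\xi\neq x$, only served that abandoned route; inversion of adjunction already gives log canonicity on a neighborhood of $x$.

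Also fix the citation. The implication ``$(H,tD|_H)$ lc $\Rightarrow$ $(X,H+tD)$ lc near $H$'' is Kawakita's theorem \cite{Kaw07}. What the connectedness lemma yields in \cite[Theorem~5.50]{KM98} is only the klt $\Rightarrow$ plt version. That version suffices if you run the induction on the statement ``$(X,tD)$ is klt near $x$ whenever $t\,\mathrm{ord}_x(D)<1$'', which is immediate on curves, and then let $t\uparrow 1/m$.
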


\subsection{The valuative tree}\label{subsec: valuative tree}

We now specialize the preceding section to smooth surface germs.

\begin{definition}\label{defn: valuative tree}
Consider the local ring $R:=\mathcal{O}_{\mathbb{C}^2,0}$ with maximal ideal $\mathfrak{m}:=\mathfrak{m}_{\mathbb{C}^2,0}$. A \emph{centered normalized semivaluation} on $R$ is a map $\nu:R\longrightarrow\mathbb{R}_{\geq 0}\cup\{+\infty\}$ satisfying the same axioms as a real valuation in Definition~\ref{def: real-valuation}, except that $\nu(\phi)=+\infty$ is allowed for $0\neq\phi\in R$, and normalized by $\nu(\mathfrak{m}):=\min\{\nu(\phi):0\neq\phi\in\mathfrak{m}\}=1$. Let $\mathcal{V}$ be the collection of all centered normalized semivaluations on $R$. It carries a natural $\mathbb{R}$-tree structure, called the \emph{valuative tree} \cite{FJ04}.
\end{definition}

The finite-valued subspace
$$ \mathcal{V}^{\circ}:=\{\nu\in\mathcal{V}:\nu(\phi)<+\infty\text{ for every }0\neq\phi\in R\} $$
is a convex subtree of $\mathcal{V}$. Allowing the value $+\infty$ on a nonzero germ adjoins the curve valuations as ends of the valuative tree.
We order $\mathcal{V}$ pointwise: $\nu\leq\mu$ if $\nu(\phi)\leq\mu(\phi)$ for every $\phi\in R$. This is the rooted-tree order, and by \cite[Theorem~3.14]{FJ04}, $\mathcal{V}$ is complete with root $\mathrm{ord}_0$. We denote its unique segments by $[\nu,\mu]$; when $\nu\leq\mu$, one has $[\nu,\mu]=\{\lambda\in\mathcal{V}:\nu\leq\lambda\leq\mu\}$. An \emph{end} is a maximal point of $\mathcal{V}$, and a segment ending at an end will be called a \emph{terminal segment}.

In the notation of Definition~\ref{def: divisorial-valuation}, if $E$ is a prime exceptional divisor above the origin, its generic multiplicity and normalized divisorial valuation are $b_E:=\mathrm{ord}_E(\mathfrak{m})$ and $\nu_E:=b_E^{-1}\mathrm{ord}_E$.
Thus $b_E\nu_E=\mathrm{ord}_E$ is the associated primitive divisorial valuation; see \cite[Section~1.5.3]{FJ04}.

If $C$ is an irreducible, possibly formal, curve germ through the origin, with multiplicity $m(C)$, its normalized curve semivaluation is
$$ \nu_C(\phi):=\frac{i_0(C,\{\phi=0\})}{m(C)}, $$
where the intersection multiplicity $i_0(C,\{\phi=0\})$ is interpreted formally when necessary and is understood to be $+\infty$ when $\phi$ vanishes identically on $C$. The remaining ends, obtained as limits along infinite sequences of point blow-ups, are called infinitely singular valuations; see \cite[Sections~1.5.5 and~1.5.7]{FJ04}.

A quasi-monomial valuation $\nu$ is called \emph{irrational} if the skewness $\alpha(\nu)$ (see Proposition~\ref{prop: FJ-tree structure}(ii)) is irrational, or equivalently, $\nu$ is not divisorial.

\begin{proposition}\label{prop: FJ-tree structure}
The valuative tree has the following properties.
\begin{enumerate}[label=\textup{(\roman*)},leftmargin=2.2em]
\item \textup{(see \cite[Definition~2.23 and Proposition~3.20(ii)]{FJ04})} Its quasi-monomial points are precisely the nonends and are either divisorial or irrational. Its ends are precisely the curve valuations and the infinitely singular valuations.
\item \textup{(see \cite[Definition~3.23 and Theorem~3.26]{FJ04})} The skewness
$$ \alpha(\nu):=\sup_{0\neq\phi\in\mathfrak{m}}\frac{\nu(\phi)}{\mathrm{ord}_0(\phi)} $$
strictly increases along every nontrivial ordered segment. It is finite on the quasi-monomial subtree, and a quasi-monomial valuation is divisorial if and only if its skewness is rational. Consequently, the divisorial points are dense in every nondegenerate segment contained in the quasi-monomial subtree.
\item \textup{(see \cite[Remark~3.33]{FJ04})} For $t>0$, set $\mathfrak{a}_t(\nu):=\{\phi\in R:\nu(\phi)\geq t\}$. For every $\nu\in\mathcal{V}$, one has
\begin{equation}\label{eq: vol-skewness}
 \mathrm{vol}(\nu):=\limsup_{t\to\infty}\frac{\ell_R(R/\mathfrak{a}_t(\nu))}{t^2/2}=\alpha(\nu)^{-1}.
\end{equation}
Here and below, $(+\infty)^{-1}=0$.
\end{enumerate}
\end{proposition}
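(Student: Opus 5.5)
All three items of the proposition are quoted from \cite{FJ04}. I will not reprove the structural statements; the plan is to check that the two derived assertions, the density claim in (ii) and formula \eqref{eq: vol-skewness} in (iii), follow from the cited results.

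\emph{The cited parts.} For (i) and the first half of (ii) I rely directly on \cite[Definition~2.23, Proposition~3.20, Theorem~3.26]{FJ04}. The key input I take from there is that skewness $\alpha$ restricted to any segment $[\nu,\mu]$ is continuous and strictly increasing, so it is a homeomorphism onto $[\alpha(\nu),\alpha(\mu)]$.

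\emph{Density claim in (ii).} Take a nondegenerate segment inside the quasi-monomial subtree. Its endpoints have finite skewness, so its image under $\alpha$ is a nondegenerate compact interval. Rational numbers are dense in that interval. By the homeomorphism above, the points of rational skewness are dense in the segment. By the divisoriality criterion in (ii), these are exactly the divisorial points.

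\emph{Formula (iii) for divisorial $\nu=\nu_E$.} I first treat $\nu_E$, where $E$ appears on a composition of point blow-ups $\pi:Y\to(\mathbb{C}^2,0)$.
\begin{enumerate}[label=\textup{(\alph*)},leftmargin=2.2em]
\item Let $Z_E$ be the exceptional $\mathbb{Q}$-divisor determined by $Z_E\cdot F=-\delta_{EF}$ for every exceptional prime $F$. It is antinef.
\item By Zariski's theory of complete ideals, $\mathfrak{a}_t(\mathrm{ord}_E)$ equals $\pi_*\mathcal{O}_Y(-D_t)$ for an antinef divisor $D_t$. Its $E$-coefficient is $t+O(1)$, and $D_t=c\,tZ_E+O(1)$ for the constant $c=1/\mathrm{ord}_E(Z_E)$.
\item Lipman's Riemann--Roch for antinef divisors on a rational surface singularity gives $\ell_R(R/\pi_*\mathcal{O}_Y(-D))=-\tfrac12 D\cdot(D+K_Y)$.
\item Hence $\ell_R(R/\mathfrak{a}_t(\mathrm{ord}_E))\sim -\tfrac{t^2}{2}\,Z_E^2/\mathrm{ord}_E(Z_E)^2$.
\item I use the Favre--Jonsson identity $\alpha(\nu_E)=b_E^{-2}\,\mathrm{ord}_E(Z_E)$ together with $Z_E^2=\mathrm{ord}_E(Z_E)$. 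This yields $\mathrm{vol}(\mathrm{ord}_E)=1/\mathrm{ord}_E(Z_E)$.
\item Since $\mathrm{vol}(c\,\nu)=c^{-2}\mathrm{vol}(\nu)$, normalizing by $b_E$ gives $\mathrm{vol}(\nu_E)=b_E^2/\mathrm{ord}_E(Z_E)=\alpha(\nu_E)^{-1}$.
\end{enumerate}
Bookkeeping these normalizations is the main obstacle. Concretely, I must match the intersection-theoretic description of skewness with the one in \cite[Remark~3.33]{FJ04}, since a mistake in the powers of $b_E$ would break the identity. So I would first check the formula on monomial valuations, where $\ell_R(R/\mathfrak{a}_t)$ is a lattice-point count in a triangle.

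\emph{Formula (iii) for general $\nu$.} For irrational or infinitely singular $\nu$, I approximate $\nu$ from below along $[\mathrm{ord}_0,\nu]$ by divisorial $\nu_k\uparrow\nu$. If $\nu\leq\mu$ then $\mathfrak{a}_t(\nu)\subseteq\mathfrak{a}_t(\mu)$, so $\mathrm{vol}$ is nonincreasing along the tree. This gives $\mathrm{vol}(\nu)\leq\alpha(\nu_k)^{-1}\to\alpha(\nu)^{-1}$, using continuity of $\alpha$ along the segment. For the reverse inequality I would approximate $\nu$ from above by divisorial points when $\nu$ is not an end, using density from (ii). When $\nu$ is an end with $\alpha(\nu)=+\infty$, such as a curve valuation, the upper bound already gives $0$. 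For infinitely singular ends of finite skewness, I need a uniform comparison $\mathfrak{a}_t(\nu)\supseteq\mathfrak{a}_{t(1+\varepsilon)}(\nu_k)$. I would deduce it from the description of $\nu$ as a limit of the $\nu_k$ in the weak topology, which is exactly the content of \cite[Remark~3.33]{FJ04}.
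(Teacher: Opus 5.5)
The paper gives no argument for this proposition; all three items are quoted from \cite{FJ04}, with (iii) taken from \cite[Remark~3.33]{FJ04}. Much of your re-derivation is sound. The density argument works, provided that for a segment with incomparable endpoints you run it on the two ordered halves through $\nu\wedge\mu$. The divisorial computation also works, with one sign correction: $Z_E^2=\sum_F\mathrm{ord}_F(Z_E)\,(F\cdot Z_E)=-\mathrm{ord}_E(Z_E)$. With the sign as you wrote it, step (d) would give a negative volume. The upper bound $\mathrm{vol}(\nu)\le\alpha(\nu)^{-1}$ via monotonicity along $[\mathrm{ord}_0,\nu]$ is fine for every $\nu$, and so is the lower bound for irrational $\nu$ via approximation from above.

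The genuine gap is the lower bound for infinitely singular ends of finite skewness. The inclusion you propose, $\mathfrak{a}_{t(1+\varepsilon)}(\nu_k)\subseteq\mathfrak{a}_t(\nu)$, is automatic from $\nu_k\le\nu$ and points the wrong way. It gives $\ell_R(R/\mathfrak{a}_t(\nu))\le\ell_R(R/\mathfrak{a}_{t(1+\varepsilon)}(\nu_k))$, which is only another upper bound on $\mathrm{vol}(\nu)$. What you need is the reverse inclusion $\mathfrak{a}_t(\nu)\subseteq\mathfrak{a}_{t/(1+\varepsilon)}(\nu_k)$, that is, the uniform estimate $\nu\le(1+\varepsilon)\nu_k$ on all of $R$.

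Weak convergence cannot supply this, because it is only pointwise in $\phi$, and the volume is not weakly continuous. For example, take the valuations that are monomial with weights $(1,2)$ in coordinates $(x,y-c_kx)$, with the $c_k$ distinct. They converge weakly to $\mathrm{ord}_0$, yet each has volume $1/2$ while $\mathrm{vol}(\mathrm{ord}_0)=1$.

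The missing ingredient is the formula $\nu(\phi)=m(\phi)\,\alpha(\nu\wedge\nu_\phi)$ for irreducible $\phi$ with curve valuation $\nu_\phi$, from \cite{FJ04}. Suppose $\mu\le\nu$. Then $\mu$ and $\nu\wedge\nu_\phi$ both lie in the totally ordered segment $[\mathrm{ord}_0,\nu]$, so there are two cases.
\begin{itemize}
\item If $\nu\wedge\nu_\phi\le\mu$, then $\mu\wedge\nu_\phi=\nu\wedge\nu_\phi$, so $\nu(\phi)=\mu(\phi)$.
\item Otherwise $\mu<\nu\wedge\nu_\phi$, so $\mu\wedge\nu_\phi=\mu$, and the ratio $\nu(\phi)/\mu(\phi)$ equals $\alpha(\nu\wedge\nu_\phi)/\alpha(\mu)$.
\end{itemize}
In either case $\nu(\phi)\le\frac{\alpha(\nu)}{\alpha(\mu)}\,\mu(\phi)$, and by factorization this extends to every $\phi$.

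Taking $\mu=\nu_k$ gives $\mathfrak{a}_t(\nu)\subseteq\mathfrak{a}_{t\alpha(\nu_k)/\alpha(\nu)}(\nu_k)$. Using your divisorial asymptotics, this yields $\mathrm{vol}(\nu)\ge\alpha(\nu_k)/\alpha(\nu)^2$, which tends to $\alpha(\nu)^{-1}$ as $k\to\infty$. This argument handles every $\nu$ of finite skewness at once, so it also makes your separate approximation from above for irrational valuations unnecessary.
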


We next recall the local dynamics on $\mathcal{V}$. Let $f:(\mathbb{C}^2,0)\longrightarrow(\mathbb{C}^2,0)$
be a finite holomorphic fixed-point germ. Its local topological degree is
$$ d(f):=\ell_R(R/f^*\mathfrak{m}). $$
For $\nu\in\mathcal{V}$, define the following numerical invariants
$$ (f_*\nu)(\phi):=\nu(f^*\phi), \qquad c(f,\nu):=\nu(f^*\mathfrak{m}), \qquad f_\bullet\nu:=\frac{1}{c(f,\nu)}f_*\nu. $$
Since $f$ is finite, it contracts no curve, so $c(f,\nu)<+\infty$ and $f_\bullet\nu\in\mathcal{V}$ for every $\nu\in\mathcal{V}$. Moreover, $f_\bullet$ preserves divisorial, irrational, curve, and infinitely singular valuations \cite[Proposition~2.4]{FJ07}.
The attraction cocycle and functoriality give
$$ c(f^n,\nu)=\prod_{j=0}^{n-1}c(f,f_\bullet^j\nu), \qquad (f^n)_\bullet=f_\bullet^n; $$
see \cite[Section~2.1, especially Definitions~2.2--2.3 and equation~(2.1)]{FJ07}. The ordinary and asymptotic attraction rates are
$$ c(f):=c(f,\mathrm{ord}_0)=\max\{k\geq 1:f^*\mathfrak{m}\subseteq\mathfrak{m}^k\}, \qquad c_\infty(f):=\lim_{n\to\infty}c(f^n)^{1/n}. $$
The latter limit exists by supermultiplicativity \cite[Definition~4.1]{FJ07}.

Following \cite[Introduction and Section~2]{GR}, we call $f$ \emph{superattracting} if $c(f^n)\to+\infty$, equivalently if the differential $df_0$ is nilpotent. Following \cite[Theorem~4.2 and Definition~4.3]{FJ07} and \cite[Theorem~2.6 and Definition~2.7]{GR}, an \emph{eigenvaluation} of $f$ is a valuation $\nu_\star\in\mathcal{V}$ satisfying $f_\bullet\nu_\star=\nu_\star$ and $c(f,\nu_\star)=c_\infty(f)$, which is either quasi-monomial or an attracting end. Here an end $\nu_\star$ is attracting if there exists an $f_\bullet$-invariant terminal segment $[\nu_0,\nu_\star]$ such that $f_\bullet^n(\nu)\to\nu_\star$ for every $\nu\in[\nu_0,\nu_\star]$. Its multiplier is $c_\infty(f)$.

\begin{proposition}\label{prop: FJ-local dynamics}
Let $f:(\mathbb{C}^2,0)\to(\mathbb{C}^2,0)$ be a finite superattracting germ.
\begin{enumerate}[label=\textup{(\roman*)},leftmargin=2.2em]
\item \textup{(see \cite[Theorem~4.2]{FJ07})} There exists an eigenvaluation $\nu_\star\in\mathcal{V}$ such that
$$ f_*\nu_\star=c_\infty(f)\nu_\star. $$
\item \textup{(see \cite[Theorem~4.2, Proposition~3.4, and Proposition~5.2(i)]{FJ07})} If $\nu_\star$ is an end, then there exists $\nu_0<\nu_\star$ such that $c(f,\nu)=c_\infty(f)$ on $[\nu_0,\nu_\star]$, the map $f_\bullet$ is order preserving there, and
$$ f_\bullet\nu>\nu \qquad \text{for every }\nu\in[\nu_0,\nu_\star). $$
\item \textup{(see \cite[Theorem~5.1(ii) and Lemma~5.6]{FJ07})} If $\nu_\star$ is irrational, then there are a modification, a fixed point of the lifted germ, and local coordinates $(z,w)$ in which the lift is conjugate to
$$ \widehat f(z,w)=(z^aw^b,z^cw^d), \qquad M:=\begin{pmatrix}a&b\\ c&d\end{pmatrix}, $$
where $a,b,c,d\in\mathbb{Z}_{\geq 0}$ and $\det M\neq 0$. Moreover, $c_\infty(f)$ is the spectral radius of $M$, and the positive weight vector defining $\nu_\star$ is a $c_\infty(f)$-eigenvector of $M$.
\end{enumerate}
\end{proposition}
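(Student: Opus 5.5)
Since every part of Proposition~\ref{prop: FJ-local dynamics} is quoted from \cite{FJ07}, the proof mainly checks that the hypotheses and normalizations there match the ones fixed in Subsection~\ref{subsec: valuative tree}. First I will confirm three points. Favre--Jonsson work with the same valuative tree $\mathcal{V}$ of centered semivaluations normalized by $\nu(\mathfrak{m})=1$. Their action $f_\bullet$ and attraction rate $c(f,\nu)=\nu(f^*\mathfrak{m})$ agree with ours. Our notion of superattracting ($c(f^n)\to\infty$, equivalently $df_0$ nilpotent) is the hypothesis under which \cite[Theorem~4.2]{FJ07} applies. Finiteness of $f$ guarantees $c(f,\nu)<\infty$ for all $\nu$, so $f_\bullet$ is a self-map of $\mathcal{V}$. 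This is exactly the setting of \cite[Section~2]{FJ07}.

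For (i), I will invoke \cite[Theorem~4.2]{FJ07}, which gives $\nu_\star$ with $f_\bullet\nu_\star=\nu_\star$ and $c(f,\nu_\star)=c_\infty(f)$. Unwinding the definition $f_\bullet\nu=c(f,\nu)^{-1}f_*\nu$ then gives $f_*\nu_\star=c(f,\nu_\star)\nu_\star=c_\infty(f)\nu_\star$.

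For (ii), suppose $\nu_\star$ is an end. By \cite[Theorem~4.2]{FJ07}, it is then attracting, with an invariant terminal segment $[\nu_0,\nu_\star]$. On this segment \cite[Proposition~3.4]{FJ07} says $f_\bullet$ is monotone, hence order preserving. Since $f_\bullet^n\nu\to\nu_\star$ and $\nu<\nu_\star$, this forces $f_\bullet\nu>\nu$ on $[\nu_0,\nu_\star)$. Shrinking $\nu_0$ toward $\nu_\star$ if necessary, \cite[Proposition~5.2(i)]{FJ07} makes $c(f,\cdot)$ constant on the segment. By continuity of $c(f,\cdot)$ along segments, that constant is its value $c_\infty(f)$ at $\nu_\star$.

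For (iii), I will cite \cite[Theorem~5.1(ii) and Lemma~5.6]{FJ07}, which produce the monomial normal form. The one step needing genuine care is checking that in that normal form the multiplier equals the spectral radius of $M$ and the weight vector is a Perron eigenvector. I would verify this directly. The monomial valuation $\nu_w$ with weights $w=(w_1,w_2)$ satisfies $\widehat f_*\nu_w=\nu_{Mw}$, because $\nu_w(z^aw^b)=aw_1+bw_2$, and so on. Invariance up to scaling therefore means $Mw=\lambda w$ with $w>0$. By Perron--Frobenius, $\lambda$ is the spectral radius of $M$. It equals $c_\infty(f)$ because attraction rates computed on the modification differ from those at $0$ by bounded factors, which disappear in the $n$-th root limit. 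This bounded-factor comparison is the only place I expect to need an argument beyond direct citation.
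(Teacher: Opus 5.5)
Your parts (i) and (ii) match the paper, which gives no argument beyond citing \cite{FJ07} for each assertion. Your extra checks there are correct: unwinding $f_\bullet\nu_\star=\nu_\star$ into $f_*\nu_\star=c_\infty(f)\nu_\star$ works, and so does deducing $f_\bullet\nu>\nu$ from order preservation on the invariant segment together with $f_\bullet^n\nu\to\nu_\star$.

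The one argument you add yourself, in (iii), has an unsupported step. Write $\omega>0$ for the weight vector of $\nu_\star$ in the coordinates $(z,w)$; your notation uses $w$ for both. You use only projective invariance to get $M\omega=\lambda\omega$. You then try to identify $\lambda$ with $c_\infty(f)$ by comparing the attraction rates $c(\widehat f^{\,n})$ at the fixed point on the modification with $c(f^n)$ at $0$. That comparison is asserted, not proved.
\begin{itemize}
\item The easy Izumi-type bound only gives $c(\widehat f^{\,n})\le C\,c(f^n)$, not the reverse.
\item Even granting $c_\infty(\widehat f)=c_\infty(f)$, you would still need $c_\infty(\widehat f)=\rho(M)$ for the monomial germ. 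This is false for general nonnegative $M$: for $M=\mathrm{diag}(2,3)$ one has $c(\widehat f^{\,n})=2^n$ but $\rho(M)=3$. It holds here only because $M$ has a positive eigenvector, which you did not invoke.
\end{itemize}

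None of this is needed. Part (i) already gives the exact scaling $f_*\nu_\star=c_\infty(f)\nu_\star$. Since $\pi\circ\widehat f=f\circ\pi$ with $\pi$ birational, $f_*$ and $\widehat f_*$ agree on valuations of the common function field. Writing $\nu_\star=t\,\nu_\omega$ with $t>0$ and evaluating on $z$ and $w$ gives $M\omega=c_\infty(f)\,\omega$ directly. Perron--Frobenius then finishes: choose $u\geq 0$, $u\neq 0$, with $u^{T}M=\rho(M)u^{T}$. Then $\rho(M)\,u^{T}\omega=u^{T}M\omega=c_\infty(f)\,u^{T}\omega$, and $u^{T}\omega>0$ because $\omega>0$, so $c_\infty(f)=\rho(M)$.
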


The alternatives needed in the local argument are supplied by the following refinement.

\begin{proposition}[Gignac--Ruggiero]\label{prop: GR trichotomy}
Let $f:(\mathbb{C}^2,0)\to(\mathbb{C}^2,0)$ be a finite superattracting germ. One of the following three alternatives occurs:
\begin{enumerate}[label=\textup{(\roman*)},leftmargin=2.2em]
\item $f$ has a unique eigenvaluation, which is an end of $\mathcal{V}$;
\item $f$ has a unique eigenvaluation, which is quasi-monomial;
\item there is a nondegenerate segment $I$ of quasi-monomial valuations fixed pointwise by $f_\bullet^2$.
\end{enumerate}
In the third alternative, if $I$ is maximal, then for every $\nu \in I$, we have
$$ c(f^2,\nu)=c_\infty(f)^2.$$
\end{proposition}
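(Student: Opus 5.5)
The statement is Gignac--Ruggiero's classification, so the plan is to deduce it from the eigenvaluation theory of \cite{FJ07} recorded in Proposition~\ref{prop: FJ-local dynamics}, together with elementary tree arguments on $\mathcal{V}$. The closing identity $c(f^2,\nu)=c_\infty(f)^2$ will be proved separately by a short sandwich argument. Write $c_\infty:=c_\infty(f)$. By Proposition~\ref{prop: FJ-local dynamics}(i) at least one eigenvaluation $\nu_\star$ exists. We split according to whether the eigenvaluation is unique. If it is unique, it is either an end or quasi-monomial (Proposition~\ref{prop: FJ-tree structure}(i)), which gives alternatives (i) and (ii). So the substance is: \emph{if there are two distinct eigenvaluations $\nu_1\neq\nu_2$, then alternative (iii) holds.}

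\textbf{Two eigenvaluations give alternative (iii).} First, neither $\nu_i$ can be an end. Suppose $\nu_1$ were an end. By Proposition~\ref{prop: FJ-local dynamics}(ii) it attracts a terminal segment $[\nu_0,\nu_1]$ on which $f_\bullet\nu>\nu$. I would then try to show that every $f_\bullet$-fixed point with multiplier $c_\infty$ lies on this segment, hence equals $\nu_1$, contradicting $\nu_2\neq\nu_1$; I expect this step to need the finer analysis of \cite{GR}. Assuming both $\nu_i$ are quasi-monomial, consider the segment $J=[\nu_1,\nu_2]$. It passes through $\mu=\nu_1\wedge\nu_2$ and lies in the quasi-monomial subtree, since that subtree is convex. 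Because $f_\bullet$ is a continuous tree map fixing $\nu_1$ and $\nu_2$, its image $f_\bullet(J)$ contains $J$.

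To get rigidity, I would use the multiplicative cocycle together with the bounds
$$ c(f^n)\le c(f^n,\nu)\le \alpha(\nu)\,c(f^n) $$
for quasi-monomial $\nu$; see below. These bounds make $\nu\mapsto c(f,\nu)$ equal to $c_\infty$ along $J$. That in turn forces $f_\bullet|_J$ to be an isometry for the skewness-type parametrization, because $f_*$ scales valuations linearly. Such an isometric self-map of $J$ preserving the endpoint set either fixes $J$ pointwise or swaps the two branches at $\mu$. In both cases $f_\bullet^2$ fixes $J$, or at least one of its branches, pointwise, which gives a nondegenerate segment $I$ as in (iii). This rigidity step is where I expect the main obstacle: controlling $c(f,\cdot)$ on the whole of $J$, and not just at its endpoints, needs the piecewise-linearity of $f_*$ in skewness from \cite{FJ07}.

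\textbf{The identity $c(f^2,\nu)=c_\infty^2$.} Let $\nu\in I$, so $\nu$ is quasi-monomial and $f_\bullet^2\nu=\nu$. Then $(f^2)_*\nu=c(f^2,\nu)\,\nu$. Iterating this with the cocycle gives $c(f^{2n},\nu)=c(f^2,\nu)^n$.
\begin{itemize}
\item \emph{Lower bound.} Since $\nu(\mathfrak m)=1$, we have $\nu\ge\mathrm{ord}_0$ on $R$. Hence $c(f^{2n},\nu)=\nu(f^{2n*}\mathfrak m)\ge c(f^{2n})$.
\item \emph{Upper bound.} By Proposition~\ref{prop: FJ-tree structure}(ii), $\nu(\phi)\le\alpha(\nu)\,\mathrm{ord}_0(\phi)$ with $\alpha(\nu)<\infty$. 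Hence $c(f^{2n},\nu)\le\alpha(\nu)\,c(f^{2n})$.
\end{itemize}
Taking $n$-th roots and letting $n\to\infty$ gives $c(f^2,\nu)=\lim_n c(f^{2n})^{1/n}=c_\infty^2$. This holds for every quasi-monomial fixed point of $f_\bullet^2$, so in particular throughout a maximal $I$.
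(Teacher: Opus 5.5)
\textbf{The trichotomy is not established.} Your argument leaves two gaps in it.

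First, the exclusion of end eigenvaluations when there are two eigenvaluations is simply deferred to Gignac--Ruggiero. That is part of the content you need to supply.

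Second, the rigidity step fails as written. The bounds $c(f^n)\le c(f^n,\nu)\le\alpha(\nu)\,c(f^n)$ only give $\lim_n c(f^n,\nu)^{1/n}=c_\infty$. By the cocycle, this says that the geometric means of $c(f,\cdot)$ along the forward orbit of $\nu$ tend to $c_\infty$. That gives no information about $c(f,\nu)$ itself at a point $\nu\in J$ not already known to be fixed. Monotonicity of $\nu\mapsto\nu(f^*\mathfrak{m})$ in the tree order gives only $c(f,\cdot)\le c_\infty$ on $J$, since every point of $J$ lies below $\nu_1$ or $\nu_2$. The reverse inequality is the real content, and you have not proved it.

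Even constancy of $c(f,\cdot)$ would not make $f_\bullet$ an isometry. Take $f(x,y)=(x^2,y^3)$ and the monomial valuations $\nu_t$ with $\nu_t(x)=1$, $\nu_t(y)=t\ge 1$. Then $c(f,\nu_t)=2$ for all $t$, but $f_\bullet\nu_t=\nu_{3t/2}$ multiplies skewness by $3/2$. What actually forces a pointwise fixed segment is that two distinct fixed points carry the \emph{same} multiplier $c_\infty$. In a monomial chart, two independent eigenvectors of the exponent matrix with a common eigenvalue force the matrix to be scalar. Carrying this out along an arbitrary segment through the branch point $\nu_1\wedge\nu_2$, together with the end case, is the analysis in Gignac--Ruggiero (their Theorem~3.1 and Section~4). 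The paper cites that analysis, and your proposal still needs it.

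\textbf{The final identity is correct and improves on the paper.} The paper refers to the proof of Gignac--Ruggiero's Theorem~6.1, Case~4. Your sandwich $c(f^{2n})\le c(f^2,\nu)^n\le\alpha(\nu)\,c(f^{2n})$ is valid at every quasi-monomial fixed point of $f_\bullet^2$. It therefore gives $c(f^2,\nu)=c_\infty(f)^2$ without any maximality hypothesis on $I$. It also explains why the counterexample in Remark~\ref{rem: fixed-not-always-cinf} has to use ends, where $\alpha=+\infty$. Consequently, Case~3 of Theorem~\ref{thm: balanced-local} needs only the existence of some nondegenerate quasi-monomial segment fixed pointwise by $f_\bullet^2$, combined with your argument.
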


\begin{proof}
The trichotomy and the quasi-monomial nature of the fixed segment are given in \cite[Theorem~3.1 and Section~4]{GR}. The constancy of the attraction cocycle on a maximal fixed segment is proved in \cite[proof of Theorem~6.1, Case~4]{GR}.
\end{proof}

We finish by recalling the global attraction invariants that produce the local germs above.

\begin{definition}\label{def: global attraction rates}
Let $g:\mathbf{P}^2\to\mathbf{P}^2$ be a holomorphic endomorphism, and let $x\in\mathbf{P}^2$. The \emph{local attraction rate} of $g$ at $x$ and the \emph{asymptotic attraction rate} of $g$ at $x$ are, respectively,
$$ c(x,g):=\max\{m\geq 1:g^*\mathfrak{m}_{g(x)}\subseteq\mathfrak{m}_x^m\}, \qquad c_\infty(x,g):=\lim_{n\to\infty}c(x,g^n)^{1/n}. $$
The latter limit exists by \cite[Proposition~3.11]{FJ03}.
\end{definition}

If $x$ is fixed by $g$ and $f$ denotes the induced local germ, then $c(x,g^n)=c(f^n)$ for every $n$, and hence $c_\infty(x,g)=c_\infty(f)$.

\begin{definition}[{\cite[Definition~3.15]{FJ03}}]\label{def: FJ-exceptional-sets}
Let $g:\mathbf{P}^2\to\mathbf{P}^2$ be an endomorphism of algebraic degree $q\geq 2$, equivalently, a $q$-polarized endomorphism. By {\cite[Theorem~4.1]{FJ03}}, the first exceptional set $\mathcal{E}_1(g)$ is defined to be the union of the totally invariant curves of $g$, and the second exceptional set is
$$ \mathcal{E}_2(g):=\{x\in\mathbf{P}^2:c_\infty(x,g)=q\}. $$
It follows directly from the definition that $\{\mathcal{E}_i(g)\}_{i=1,2}$ does not change if $g$ is replaced by an iterate.
\end{definition}

\begin{proposition}\label{prop: FJ-exceptional}
Let $g:\mathbf{P}^2\to\mathbf{P}^2$ be as in Definition~\ref{def: FJ-exceptional-sets}. Then:
\begin{enumerate}[label=\textup{(\roman*)},leftmargin=2.2em]
\item \textup{(see \cite[Theorem~4.1]{FJ03})} $\mathcal{E}_1(g)$ consists of at most three lines in general position.
\item \textup{(see \cite[Propositions~3.13 and~5.1]{FJ03})} $\mathcal{E}_2(g)$ is finite and totally invariant, and every return germ at a periodic point of $\mathcal{E}_2(g)$ is superattracting.
\item \textup{(see \cite[Proposition~6.2]{FJ03})} There exist constants $C>0$ and $0\leq\rho<q$ such that $$\mathrm{ord}_x(R_{g^n})\leq C\rho^n$$ for every $x\notin\mathcal{E}_1(g)\cup\mathcal{E}_2(g)$.
\end{enumerate}
\end{proposition}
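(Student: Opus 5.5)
Each part of this proposition is a direct citation from \cite{FJ03}, so the proof amounts to checking that the hypotheses there match ours and recalling the mechanism behind each part; that mechanism is what the rest of the paper relies on. Throughout I identify a $q$-polarized $g$ with a degree-$q$ holomorphic self-map of $\mathbf{P}^2$, and I use the ramification formula $K_{\mathbf{P}^2}=g^*K_{\mathbf{P}^2}+R_g$. This gives $\deg R_g=3(q-1)$ and the chain rule $R_{g^n}=\sum_{j=0}^{n-1}(g^j)^*R_g$.

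For (i), I would argue as follows. Let $C$ be a totally invariant irreducible curve. After passing to an iterate, $g^*C=qC$, since $g^*C$ is supported on $C$ and has degree $q\deg C$. By the definition of $R_g$, the divisor $R_g$ then contains $(q-1)C$. Summing over the distinct totally invariant curves gives $(q-1)\sum\deg C_i\le 3(q-1)$, hence $\sum \deg C_i\le 3$. To show that each $C_i$ is a line, and that the lines are in general position, I would follow \cite[Theorem~4.1]{FJ03}, and this is the harder step. A totally invariant curve of degree at least $2$ (a conic, or a singular cubic) is ruled out by restricting $g$ to $C$, or to its normalization, and counting ramification: a totally invariant conic together with a line forces too much ramification. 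For three concurrent lines, the restriction of $g$ to the pencil through the common point contradicts the degree count. I would simply cite this step rather than redo it.

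For (ii), finiteness follows from semicontinuity. The function $x\mapsto c(x,g^n)$ is upper semicontinuous, and $c(x,g^n)\ge 2$ only on the critical locus of the iterates. The condition $c_\infty(x,g)=q$ forces $x$ into $\bigcap_n\{c(x,g^n)\ge \delta q^n\}$, which \cite[Proposition~3.13]{FJ03} shows is a proper algebraic set containing no curves outside $\mathcal{E}_1$, and is therefore finite. For total invariance I would use the multiplicativity $c(x,g^{n+m})\le c(x,g^n)\,c(g^n x,g^m)$ together with the bound $c(y,g^m)\le q^m$. These show that $c_\infty$ is constant along backward and forward orbits in the appropriate sense, so that $g^{-1}\mathcal{E}_2=\mathcal{E}_2$. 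Superattraction is then immediate: at a periodic point of period $p$, the return germ $h=g^p$ satisfies $c(h^n)^{1/n}\to q^p>1$, so $c(h^n)\to\infty$.

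For (iii), which is the substantive estimate, I would bound the multiplicity of the pullback: $\mathrm{ord}_x((g^j)^*D)\le e(x,g^j)\,\mathrm{ord}_{g^jx}(D)$, where $e$ denotes the local topological degree. Applied to the chain rule, this gives $\mathrm{ord}_x(R_{g^n})\le \sum_j e(x,g^j)\,\mathrm{ord}_{g^jx}(R_g)$. It then suffices to show that $e(x,g^j)\le C'\rho^j$ uniformly for $x$ outside $\mathcal{E}_1\cup\mathcal{E}_2$, with some $\rho<q$. The obstacle is uniformity in $x$, because an orbit can pass close to the exceptional set many times. The remedy in \cite[Proposition~6.2]{FJ03} is to use that the local degree is submultiplicative along orbits, and that an orbit avoiding $\mathcal{E}_1\cup\mathcal{E}_2$ visits the critical set with maximal local degree only a bounded proportion of the time. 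I would cite that argument for the final bound.
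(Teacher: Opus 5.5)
As citations, your three parts agree with the paper, which proves nothing here beyond the references to \cite{FJ03}. The problem is the mechanisms you attribute to those references: the one you give for (iii) would fail. You reduce (iii) to showing $e(x,g^j)\le C'\rho^j$ with $\rho<q$ uniformly off $\mathcal{E}_1(g)\cup\mathcal{E}_2(g)$, and this is false.

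Take $q=3$, $g=[x^2(x+z):y^2(y+z):z^3]$ and $p=[0:0:1]$. Then $p$ is fixed, with germ $(u,v)\mapsto(u^2(1+u),v^2(1+v))$. Hence $e(p,g^j)=4^j$, $c(p,g^j)=2^j$ and $\mathrm{ord}_p(R_{g^k})=2(2^k-1)$.
\begin{itemize}
\item $p\notin\mathcal{E}_2(g)$, since $c_\infty(p,g)=2<3$.
\item $p\notin\mathcal{E}_1(g)$. A curve through $p$ that is totally invariant under some $g^k$ (we may take $k\geq2$) would force $\mathrm{ord}_p(R_{g^k})\geq 3^k-1>2(2^k-1)$.
\end{itemize}
Yet $e(p,g^j)=4^j>3^j$. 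Local degree is exactly multiplicative along orbits, and $p$ is a critical point revisited at every step. So no count of visits to the critical set can repair this.

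The bound $\mathrm{ord}_x((g^j)^*D)\le e(x,g^j)\,\mathrm{ord}_{g^jx}(D)$ is too lossy because it ignores the attraction rate. Apply Lemma~\ref{lem: local-volume} to $v=\mathrm{ord}_x$ for a finite germ $h$, and use $\mathrm{vol}=\alpha^{-1}$ from Proposition~\ref{prop: FJ-tree structure}(iii). This gives $\alpha(h_\bullet\mathrm{ord}_x)\le e(x,h)/c(x,h)^2$, and hence $\mathrm{ord}_x(h^*D)\le \frac{e(x,h)}{c(x,h)}\,\mathrm{ord}_{h(x)}(D)$. This bound is exact in the example, where $e/c=2^j$. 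So the local degree alone cannot be the controlling quantity in (iii); the attraction rate $c(x,g^j)$ has to enter.

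In (ii), your inequality $c(x,g^{n+m})\le c(x,g^n)\,c(g^nx,g^m)$ points the wrong way. Pulling back the maximal ideal in two steps gives $c(x,g^{n+m})\ge c(x,g^n)\,c(g^nx,g^m)$, which is the supermultiplicativity the paper uses to define $c_\infty$. The reverse already fails for the germ $h(u,v)=(v,u^2)$, where $c(h)=1$ but $c(h^2)=2$.

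The upper comparison you actually need is $c(x,g^{n+1})\le e(x,g)\,c(g(x),g^n)$. It follows from the same inequality $\mathrm{ord}_x(g^*\phi)\le e(x,g)\,\mathrm{ord}_{g(x)}(\phi)$ that you use in (iii). Combined with supermultiplicativity, it gives $c_\infty(x,g)=c_\infty(g(x),g)$, hence $g^{-1}(\mathcal{E}_2(g))=\mathcal{E}_2(g)$.

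Two further steps are unsupported as written:
\begin{itemize}
\item The condition $c_\infty(x,g)=q$ concerns a limit of $n$-th roots. It does not by itself give $c(x,g^n)\ge\delta q^n$ with a uniform $\delta>0$.
\item In (i), the count $\sum\deg C_i\le 3$ does not exclude a conic plus a line, or a smooth cubic. Linearity of totally invariant curves is a separate result.
\end{itemize}
Since you defer these last two points to \cite{FJ03}, they are gaps in your explanation rather than in the cited statement.
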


In the notation of \cite{Meng23}, the reduced divisor supported on $\mathcal{E}_1(g)$ is denoted by $T_g$. We record the following result.

\begin{theorem}[{\cite[Theorem~4.2]{Meng23}}]\label{thm: nonempty-E1}
Let $g:\mathbf{P}^2\to\mathbf{P}^2$ be a $q$-polarized endomorphism. If $\mathcal{E}_1(g)\neq\varnothing$, then $(\mathbf{P}^2,\frac{R_{g^s}}{q^s-1})$ is log Calabi--Yau for some integer $s\geq 1$.
\end{theorem}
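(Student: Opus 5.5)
The plan is to replace $g$ by an iterate, split $\mathbf{P}^2$ into a neighbourhood of $\mathcal{E}_1(g)$, the generic locus, and the finitely many points of $\mathcal{E}_2(g)$ off the lines, and to check log canonicity of the normalized ramification pair on each piece.

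\textbf{Setup and generic locus.} By Proposition~\ref{prop: FJ-exceptional}(i), $T_g$ is a sum of at most three lines in general position. Replacing $g$ by an iterate, each component $L$ is totally invariant, so $g^*L=qL$ and $L$ occurs in $R_g$ with coefficient exactly $q-1$. The same holds for every $g^s$. Hence
$$ \frac{R_{g^s}}{q^s-1}=T_g+B_s,\qquad B_s:=\frac{R'_s}{q^s-1}\geq 0, $$
where no component of $R'_s$ is contained in $T_g$. Since $\deg R_{g^s}=3(q^s-1)$, we get $\deg B_s=3-\deg T_g$, so $K+T_g+B_s\sim_{\mathbb{Q}}0$ automatically and only log canonicity must be checked. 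If $T_g$ has three lines, then $B_s=0$ and the pair is the snc pair $(\mathbf{P}^2,\{xyz=0\})$, so it is lc. Off $\mathcal{E}_1(g)\cup\mathcal{E}_2(g)$, Proposition~\ref{prop: FJ-exceptional}(iii) gives $\mathrm{ord}_x(R_{g^s})\leq C\rho^s<q^s-1$ for $s\gg0$. Lemma~\ref{lem: classical izumi type inequality} then shows the pair is klt there.

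\textbf{Near the lines.} Near a totally invariant line $L$, I will use inversion of adjunction. The pair is lc near $L$ as soon as $(L,\mathrm{Diff}_L(T_g-L+B_s))$ is lc, that is, as soon as every coefficient of $(T_g-L)|_L+B_s|_L$ is at most $1$. I will compute this restriction via adjunction for the endomorphism $h=g^s|_L$ of $L\cong\mathbf{P}^1$, which has degree $q^s$. Restricting $K+T_g=(g^s)^*(K+T_g)+R'_s$ to $L$ should identify $R'_s|_L$ with $R_h$ minus the contribution of the totally invariant points $L\cap L'$, where $L'$ runs over the other lines of $T_g$. A point $p\in L$ has $\mathrm{mult}_p R_h=e_h(p)-1\leq q^s-1$, which gives coefficient at most $1$. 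At the intersection points $L\cap L'$, total invariance gives $e_h=q^s$, so $R'_s$ contributes $0$ there and the coefficient is exactly $1$. This bookkeeping, including the possible non-transversality of $R'_s$ to $L$ at intersection points, is routine once the restriction formula is written down carefully.

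\textbf{Main obstacle: points of $\mathcal{E}_2(g)$ off $\mathcal{E}_1(g)$.} I expect this step to be the hardest. After passing to an iterate, such a point $x$ is fixed and totally invariant, and the germ there is superattracting by Proposition~\ref{prop: FJ-exceptional}(ii). My first attempt is to use the extra rigidity of a totally invariant line $L\not\ni x$ to rule such points out. On $\mathbf{P}^2\setminus L\cong\mathbf{C}^2$, $g$ is a polynomial map whose top-degree part is nondegenerate, and $g^{-1}(x)=\{x\}$ with local degree $q^2$. I would compare this with the local degree $q$ along lines through $x$ via the pencil of lines through $x$ to derive a contradiction, or else force $x\in L$. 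If that fails, the fallback is to treat $x$ by an equivariant blow-up extracting an eigenvaluation, as in the sketch of the main theorem, and to apply the same adjunction argument on the exceptional divisor. Taking a common iterate over all pieces then yields a single $s$ that works everywhere.
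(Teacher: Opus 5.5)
The paper does not prove this statement; it imports it from \cite[Theorem~4.2]{Meng23}, so I assess your argument on its own.

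\textbf{What is correct.} Your reduction to $T_g+B_s$, the klt estimate off $\mathcal{E}_1(g)\cup\mathcal{E}_2(g)$, and the adjunction along the lines are all correct. The bookkeeping near the lines is clean. In local coordinates with $L=\{x=0\}$ one has $g^{s*}x=x^{q^s}a$ with $a$ a unit, so $R'_s|_L=R_h$ away from $T_g-L$. At $p\in L\cap L'$ one has $g^s=(x^{q^s}a,t^{q^s}b)$. The Jacobian there is $(xt)^{q^s-1}u$ with $u(p)=q^{2s}a(p)b(p)\neq0$, so $R'_s$ does not pass through $p$. Hence $(\mathbf{P}^2,T_g+B_s)$ is lc near $T_g$ for \emph{every} $s$.

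\textbf{The gap.} It lies in the step you flag as the main obstacle. Your first attempt cannot succeed, because points of $\mathcal{E}_2(g)\setminus\mathcal{E}_1(g)$ really occur. Take $g=[x^2+y^2:xy:z^2]$, a morphism with $q=2$.
\begin{enumerate}[label=\textup{(\alph*)},leftmargin=2em]
\item The line $\{z=0\}$ is totally invariant.
\item The point $[0:0:1]$ is totally invariant with $c_\infty=2=q$: in the chart $z=1$, each $g^n$ is homogeneous of degree $2^n$ with an isolated zero.
\item $\mathcal{E}_1(g)=\{z=0\}$. A line $ax+by+cz=0$ with $c\neq0$ and $(a,b)\neq(0,0)$ cannot be totally invariant, since $(ax+by+cz)^2$ has a mixed $z$-term that $g^*(ax+by+cz)$ lacks. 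A totally invariant line through $[0:0:1]$ would give an exceptional point of $t\mapsto t+1/t$, and this map has none.
\end{enumerate}
In this example, lines through $[0:0:1]$ map to lines through $[0:0:1]$ with degree $q$. So no comparison via the pencil can produce a contradiction or force $x\in L$.

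\textbf{The fallback is mandatory.} It does work, and without circularity, but only by importing all of Section~4. Let $x$ be such a point, made totally invariant by an iterate. The germ at $x$ is superattracting by Proposition~\ref{prop: FJ-exceptional}(ii), has local degree $q^2$, and has $c_\infty=q$. Then:
\begin{enumerate}[label=\textup{(\arabic*)},leftmargin=2em]
\item Theorem~\ref{thm: balanced-local} yields a primitive divisorial $v$ with $(g^2)_*v=q^2v$. The existence of a \emph{divisorial} eigenvaluation is the real content of the whole argument.
\item Corollary~\ref{cor: equivariant-extraction-surface} extracts $v$ equivariantly via the normalized Rees algebra.
\item Proposition~\ref{prop: crepant-descent} gives lc near $x$ for some iterate $g^{2s_x}$.
\end{enumerate}
None of these uses $\mathcal{E}_1=\varnothing$ or Theorem~\ref{thm: nonempty-E1}.

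\textbf{The common iterate.} You also need to justify your final step. Log canonicity near $x$ for one iterate must be shown to persist for its multiples. One way is to iterate $A_{\mathbf{P}^2}((g^m)_*w)=A_{\mathbf{P}^2}(w)+w(R_{g^m})$, as in Step~3 of the proof of Theorem~\ref{thm: main projective plane}.

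With these two points filled in, your argument is a complete proof. It also shows that the appeal to \cite{Meng23} in Step~2 of that proof could be replaced by adjunction along the lines together with the paper's own local argument at the isolated points of $\mathcal{E}_2$.
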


In Proposition~\ref{prop: FJ-exceptional}(iii), the quantity $\mathrm{ord}_x(R_{g^n})$ is the multiplicity $\mu(x,Jg^n)$ appearing in \cite[Definition~3.3 and Proposition~6.2]{FJ03}, because on a smooth complex variety a local equation of the ramification divisor is the Jacobian determinant.

\section{Proof of Theorem~\ref{thm: ramification-dimension-dichotomy}}

\subsection{The universal Jacobian map}

Put
$$ B:=\End_q(\mathbf{P}^n),\qquad \mathfrak{X}:=B\times\mathbf{P}^n, $$
where $B$ is the smooth irreducible parameter space of endomorphisms of algebraic degree $q$. Let $\pi:\mathfrak{X}\to B$ be the first projection. The universal endomorphism over $B$ is
$$ \mathcal{F}:\mathfrak{X}\longrightarrow\mathfrak{X},\qquad (f,x)\longmapsto(f,f(x)), $$
and its relative ramification divisor satisfies
$$ \left.R_{\mathcal{F}}\right|_{\{f\}\times\mathbf{P}^n}=R_f. $$

Choose homogeneous coordinates $[x_0:\cdots:x_n]$ on $\mathbf{P}^n$. For $f=(f_0:\cdots:f_n)$, let
$$ J(f,x):=\left(\frac{\partial f_i}{\partial x_j}(x)\right)_{0\leq i,j\leq n}. $$
Euler's identity gives $J(f,x)\cdot x=qf(x)\neq0$. Scaling $x$ or the homogeneous tuple representing $f$ scales the entire matrix by a common nonzero constant. Hence
$$ J:\mathfrak{X}\longrightarrow M:=\mathbf{P}(\mathrm{Mat}_{n+1}),\qquad (f,x)\longmapsto[J(f,x)], $$
is a well-defined morphism, where $\mathrm{Mat}_{n+1}$ denotes the space of $(n+1)\times (n+1)$ matrices. Denote its restriction to $\{f\}\times\mathbf{P}^n$ by $J_f$.

\begin{lemma}\label{lem: universal Jacobian properties}
The morphism $J$ is smooth, and $J_f$ is finite onto its image for every $f\in B$.
\end{lemma}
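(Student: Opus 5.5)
We prove the two assertions separately. Smoothness of $J$ will follow from surjectivity of the differential at every point. Finiteness of $J_f$ will follow from quasi-finiteness of a projective morphism.

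\textbf{Smoothness of $J$.} Since $\mathfrak{X}$ and $M$ are smooth, it suffices to show that $dJ_{(f,x)}$ is surjective at every point $(f,x)$. We only vary $f$ and keep $x$ fixed. After a linear change of coordinates, which acts compatibly on $B$, on $\mathbf{P}^n$, and on $M$ by conjugation, we may assume $x=[1:0:\cdots:0]$. The tangent space to $B$ at $f$ is the space of homogeneous tuples $(h_0,\dots,h_n)$ of degree $q$, modulo $f$ itself. The corresponding variation of $J$ is $(\partial h_i/\partial x_j)(x)$.

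Set $e_0=(1,0,\dots,0)$. For any target matrix $A=(a_{ij})$, consider
\[
h_i=\frac{a_{i0}}{q}x_0^q+\sum_{j\geq1}a_{ij}x_0^{q-1}x_j.
\]
Then $\partial h_i/\partial x_0(e_0)=a_{i0}$ and $\partial h_i/\partial x_j(e_0)=a_{ij}$ for $j\ge1$. Hence the affine differential is already surjective onto $\mathrm{Mat}_{n+1}$. In particular it is surjective onto the tangent space $T_{[J]}M=\mathrm{Mat}_{n+1}/\mathbb{C}J$. Note that $\mathbb{C}J$ is exactly the image of the direction $h=f$ when $x$ is fixed. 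The only thing to check is that this is compatible with the projectivizations, which is routine. Thus $dJ$ is surjective already in the $B$-direction, so $J$ is smooth, with relative dimension $\dim\mathfrak{X}-\dim M$.

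\textbf{Finiteness of $J_f$.} The map $J_f:\mathbf{P}^n\to M$ is a morphism from a projective variety, so it is proper. It therefore suffices to show that $J_f$ has finite fibres, since a proper quasi-finite morphism is finite.

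Suppose a fibre contains a positive-dimensional irreducible curve $\Gamma$. Along $\Gamma$ the matrix $[J(f,x)]$ is constant, equal to some $[A]$, so $J(f,x)=\lambda(x)A$ on the affine cone over $\Gamma$. Euler's identity gives
\[
qf(x)=J(f,x)\,x=\lambda(x)\,Ax .
\]
Hence $f(x)=[Ax]$ for $x\in\Gamma$. In particular $Ax\neq0$ on $\Gamma$, so the linear projection $x\mapsto[Ax]$ is regular on $\Gamma$.

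Now take $L$, the restriction of $\mathcal{O}(1)$ to $\Gamma$. The map $f|_\Gamma$ pulls $\mathcal{O}(1)$ back to $L^{q}$. The map $x\mapsto[Ax]$ is a linear projection regular along $\Gamma$, so it pulls $\mathcal{O}(1)$ back to $L$. Since these two maps agree on $\Gamma$, we get $L^{q}\cong L$ on $\Gamma$. Taking degrees gives $q\deg L=\deg L$ with $\deg L>0$, which contradicts $q\ge2$. Therefore every fibre of $J_f$ is finite, and $J_f$ is finite onto its image.

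The main point needing care is the smoothness step, specifically checking that surjectivity of the affine differential passes correctly to the projective tangent spaces. This is handled by the observation above that the direction $h=f$ maps onto the line $\mathbb{C}J$.
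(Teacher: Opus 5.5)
Your proof is correct. The smoothness half is the same as the paper's. In both, you move $x$ to a coordinate point and observe that the entries of $J(f,x)$ are, up to the factor $q$, independent coefficients of the $f_i$. The differential is therefore already surjective in the $B$-directions; your explicit perturbation $h_i$ is this observation written out.

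For finiteness you take a slightly different route. The paper notes that, by Euler's identity, the entries of $J_f$ are forms of degree $q-1$ with no common zero. Hence $J_f^*\mathcal{O}_M(1)\simeq\mathcal{O}_{\mathbf{P}^n}(q-1)$ is ample, and $J_f$ cannot contract a curve. You instead apply Euler's identity on a contracted curve $\Gamma$ to identify $f|_\Gamma$ with the linear projection $x\mapsto[Ax]$. Comparing pullbacks of $\mathcal{O}(1)$ then gives $L^q\cong L$.

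This is valid, but it reaches the same obstruction indirectly. The relation $L^q\cong L$ says that $L^{q-1}$ is trivial, and your function $\lambda$ is already a nowhere-vanishing section of $L^{q-1}=\mathcal{O}(q-1)|_\Gamma$. Ampleness of $J_f^*\mathcal{O}_M(1)$ rules this out at once. The paper's formulation is shorter and shows that the only input is that the entries of $J_f$ have no common zero. Yours offers a concrete geometric picture: $f$ would have to be linear along $\Gamma$.
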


\begin{proof}
Fix $(f,x)\in\mathfrak{X}$ and choose coordinates such that $x=[0:\cdots:0:1]$. Write $f_i=\sum_{|v|=q}c_{i,v}x^v$. The entries in the $j$-th column of $J(f,x)$ are
$$ c_{i,e_j+(q-1)e_n}\quad\text{if }j<n,\qquad q c_{i,qe_n}\quad\text{if }j=n. $$
Since $f(x)\neq0$, choose $i_0$ such that $c_{i_0,qe_n}\neq0$. Consider the affine chart $c_{i_0,qe_n}\neq0$ in the projective coefficient space and the affine chart of $M$ where the $(i_0,n)$-entry is nonzero. The coordinates on the latter chart are obtained by dividing every other matrix entry by the $(i_0,n)$-entry, and their pullbacks under $J$ are respectively
$$ \frac{c_{i,e_j+(q-1)e_n}}{q c_{i_0,qe_n}}\quad (j<n),
\qquad \frac{c_{i,qe_n}}{c_{i_0,qe_n}}\quad (j=n, i\neq i_0). $$
These are independent coefficient coordinates, so the differential of $J$ is surjective already in the coefficient directions. Since $\mathfrak{X}$ and $M$ are smooth, \cite[Chapter~III, Proposition~10.4]{Har} shows that $J$ is smooth.

The entries of $J_f$ are homogeneous forms of degree $q-1$ without a common zero. Therefore
$$ J_f^*\mathcal{O}_M(1)\simeq\mathcal{O}_{\mathbf{P}^n}(q-1). $$
Since this line bundle is ample, $J_f$ cannot contract a curve. Hence $J_f$ is quasi-finite, and projectivity shows that it is finite onto its image.
\end{proof}

Let
$$ D:=\{[A]\in M:\det A=0\},\qquad \Sigma:=\{[A]\in M:\rank A\leq n-1\}. $$
Then $D$ is the determinant hypersurface and $\Sigma=\Sing(D)$. For $n\geq2$, we note that $$\codim_M\Sigma=4.$$ Since the defining equation of $D$ pulls back to the homogeneous Jacobian determinant, one has the scheme-theoretic equality
$$ R_{\mathcal{F}}=J^*D. $$
Moreover, smoothness of $J$ and the Jacobian criterion give, as closed subsets,
$$ \Sing(R_{\mathcal{F}})=J^{-1}(\Sigma). $$
Indeed, if $h$ is a local equation of $D$ at $J(f,x)$, then
$$ d(h\circ J)_{(f,x)}=dh_{J(f,x)}\circ dJ_{(f,x)}. $$
Since $dJ_{(f,x)}$ is surjective, the left-hand side vanishes if and only if $dh_{J(f,x)}$ vanishes.

\subsection{Proof of the three assertions}

\begin{proof}[Proof of Theorem~\ref{thm: ramification-dimension-dichotomy}]
Suppose first that $1\leq n\leq3$. If $n=1$, then $\Sigma=\varnothing$ and $R_{\mathcal{F}}$ is smooth. If $2\leq n\leq3$, smooth pullback preserves codimension, and hence
$$ \dim\Sing(R_{\mathcal{F}})\leq\dim B+n-4<\dim B. $$
Since $\pi$ is projective, the subset $Z:=\pi(\Sing(R_{\mathcal{F}}))$ is closed and proper. In either case, there is a nonempty open subset $U\subseteq B$ such that
$$ R_{\mathcal{F},U}:=R_{\mathcal{F}}\times_B U $$
is smooth.

The induced morphism $R_{\mathcal{F},U}\to U$ is projective and flat. Indeed, $\mathfrak{X}\to B$ is smooth, and $R_{\mathcal{F}}$ is an effective Cartier divisor whose restriction to every fiber is cut out by the nonzero polynomial $\det J_f$. This polynomial is nonzero because the finite morphism $f$ is generically unramified in characteristic zero. The fiberwise Cartier criterion therefore gives flatness. Moreover, every fiber has degree $(n+1)(q-1)>0$ and is nonempty.

Let $C\subseteq R_{\mathcal{F},U}$ be the relative nonsmooth locus, which is closed because the smooth locus is open. The generic fiber is regular, since it is obtained from the smooth total space $R_{\mathcal{F},U}$ by localization, and hence it is smooth over the characteristic-zero function field of $U$. Thus $C$ does not meet the generic fiber. Since $R_{\mathcal{F},U}\to U$ is projective, the image of $C$ in $U$ is closed and proper. Removing this image, we obtain a smooth family. Therefore $R_f$ is smooth for a general $f$, proving \textup{(1)}.

Now assume $n\geq4$ and fix $f\in B$. By Lemma~\ref{lem: universal Jacobian properties}, the projective variety $Y_f:=J_f(\mathbf{P}^n)$ has dimension $n$. Since $\Sigma$ has codimension $4$ in $M$, the projective dimension theorem gives
$$ \dim(Y_f\cap\Sigma)\geq n-4. $$
Since $\Sigma=\Sing(D)$ and $R_f=J_f^*D$, the chain rule gives $J_f^{-1}(\Sigma)\subseteq\Sing(R_f)$. As $J_f$ is finite,
$$ \dim\Sing(R_f)\geq\dim J_f^{-1}(\Sigma)=\dim(Y_f\cap\Sigma)\geq n-4. $$
In particular, $R_f$ is singular. This proves \textup{(2)}.

It remains to prove \textup{(3)}. We use the standard fact that log discrepancies are preserved under smooth pullback. Consequently, log canonicity is preserved by smooth pullback and descends under smooth surjective morphisms.

Docampo's determinantal threshold formula~\cite[Theorem~D]{Docampo} implies that all log discrepancies of the affine determinant pair
$$ \left(\mathrm{Mat}_{n+1},\{A\in\mathrm{Mat}_{n+1}:\det A=0\}\right) $$
are nonnegative. Hence this pair is log canonical. The natural morphism
$$ \mathrm{Mat}_{n+1}\setminus\{0\}\longrightarrow M $$
is a smooth surjective $\mathbb{G}_m$-bundle, and the inverse image of $D$ is the affine determinant hypersurface with the origin removed. Thus $(M,D)$ is log canonical. The smoothness of $J$ and the equality $R_{\mathcal{F}}=J^*D$ then imply that $(\mathfrak{X},R_{\mathcal{F}})$ is log canonical.

By the Bertini theorem for pairs \cite[Lemma~5.17(1)]{KM98}, the restriction of a log canonical pair to a general member of a basepoint-free Cartier linear system is log canonical. Recall that $B$ is an open subset of the projective coefficient space. Choose a general complete flag in this projective space, ending at a point $f\in B$. Pulling back its successive hyperplane sections under $\pi$, at each step we obtain a general member of a basepoint-free Cartier linear system on the preceding inverse image. Repeated application of this result therefore shows that
$$ \left(\{f\}\times\mathbf{P}^n,\left.R_{\mathcal{F}}\right|_{\{f\}\times\mathbf{P}^n}\right) $$
is log canonical. Since $\left.R_{\mathcal{F}}\right|_{\{f\}\times\mathbf{P}^n}=R_f$, the pair $(\mathbf{P}^n,R_f)$ is log canonical for a general $f\in B$. This proves \textup{(3)}.
\end{proof}

\section{Log Calabi--Yau structure for \texorpdfstring{$\mathbf{P}^2$}{P2}}

\subsection{Existence of eigenvaluations for superattracting surface germs}

Let $(R,\mathfrak{m})$ be an $n$-dimensional regular local ring with residue field $\mathbb{C}$, and let $\varphi:R\to R$ be a finite local homomorphism of rank $d$. For a real valuation $v$ centered at $\mathfrak{m}$, set
$$ (\varphi_*v)(h):=v(\varphi(h)),\qquad c:=v(\varphi(\mathfrak{m})),\qquad \mu:=c^{-1}\varphi_*v. $$
For $t>0$, write $\mathfrak{a}_t(v):=\{h\in R:v(h)\geq t\}$ and
$$ \mathrm{vol}_R(v):=\limsup_{t\to\infty}\frac{\ell_R(R/\mathfrak{a}_t(v))}{t^n/n!}. $$
Consider the following $\mathfrak{m}$-filtrations (cf. \cite{BLQ24}):
$$
\begin{aligned}
\mathfrak{b}_{\bullet}&:=\{\mathfrak{b}_t\}_{t\in\mathbb{R}}=\{\varphi(\mathfrak{a}_t(\mu))R\}_{t\in\mathbb{R}},\\
\mathfrak{c}_{\bullet}&:=\{\mathfrak{c}_t\}_{t\in\mathbb{R}}=\{\mathfrak{a}_{ct}(v)\}_{t\in\mathbb{R}}.
\end{aligned}
$$
Let $\widetilde{\mathfrak{b}}_\bullet$ and $\widetilde{\mathfrak{c}}_\bullet$ denote the saturations of these $\mathfrak{m}$-filtrations in the sense of \cite[Definition~3.1]{BLQ24}. Note that if $\mathrm{vol}_R(v)>0$, then $\mathfrak{c}_\bullet=\mathfrak{a}_\bullet(c^{-1}v)$ is a positive-volume valuation filtration. By \cite[Lemma~3.20]{BLQ24}, $\mathfrak{c}_\bullet$ is saturated and $\widetilde{\mathfrak{c}}_\bullet=\mathfrak{c}_\bullet$.

\begin{lemma}\label{lem: local-volume}
With the above notation, we have
\begin{equation}\label{eq: volume-ineq}
 c^n\mathrm{vol}_R(v)\leq d\mathrm{vol}_R(\mu).
\end{equation}
Equality holds in \eqref{eq: volume-ineq} if and only if $\widetilde{\mathfrak{b}}_\bullet=\widetilde{\mathfrak{c}}_\bullet$.
\end{lemma}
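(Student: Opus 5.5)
We compare the two filtrations $\mathfrak{b}_\bullet$ and $\mathfrak{c}_\bullet$ of $R$ and compute their volumes separately. The inequality will follow from the inclusion $\mathfrak{b}_t\subseteq\mathfrak{c}_t$. The equality criterion will follow from the characterization of equal volumes for nested filtrations in \cite{BLQ24}.

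\emph{Step 1 (inclusion).} Let $h\in\mathfrak{a}_t(\mu)$. Then $v(\varphi(h))=c\,\mu(h)\geq ct$. So $\varphi(h)\in\mathfrak{a}_{ct}(v)$. Since $\mathfrak{a}_{ct}(v)$ is an ideal, this gives $\mathfrak{b}_t=\varphi(\mathfrak{a}_t(\mu))R\subseteq\mathfrak{c}_t$ for all $t$. Hence $\mathrm{vol}(\mathfrak{b}_\bullet)\geq\mathrm{vol}(\mathfrak{c}_\bullet)$, because colengths decrease as ideals get larger.

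\emph{Step 2 (volume of $\mathfrak{c}_\bullet$).} Since $\mathfrak{c}_t=\mathfrak{a}_{ct}(v)$, rescaling $t$ gives $\mathrm{vol}(\mathfrak{c}_\bullet)=c^n\mathrm{vol}_R(v)$.

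\emph{Step 3 (volume of $\mathfrak{b}_\bullet$).} Here we use that $\varphi$ is finite and flat: a finite extension of regular local rings is flat by miracle flatness, so $R$ is free of rank $d$ over $\varphi(R)$. For an $\mathfrak{m}$-primary ideal $I$, flatness gives
$$\ell_R\bigl(R/\varphi(I)R\bigr)=\ell\bigl(R\otimes_{\varphi}R/I\bigr).$$
This is the length of a free $R/I$-module of rank $d$ over $R$, counted through $\varphi$. Because the residue field is $\mathbb{C}$ on both sides, lengths over $\varphi(R)$ and over $R$ agree, and we get
$$\ell_R\bigl(R/\varphi(I)R\bigr)=d\,\ell_R(R/I).$$
Applying this with $I=\mathfrak{a}_t(\mu)$ and taking the limsup yields $\mathrm{vol}(\mathfrak{b}_\bullet)=d\,\mathrm{vol}_R(\mu)$. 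Combining Steps 1--3 gives \eqref{eq: volume-ineq}. Step 3 also shows that $\mathrm{vol}_R(\mu)=\infty$ only in degenerate cases, and then the inequality is trivial.

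\emph{Step 4 (equality case).} Saturation preserves volume \cite{BLQ24}, and it is monotone. So $\widetilde{\mathfrak{b}}_\bullet\subseteq\widetilde{\mathfrak{c}}_\bullet=\mathfrak{c}_\bullet$, and the volumes of the saturations are $d\,\mathrm{vol}_R(\mu)$ and $c^n\mathrm{vol}_R(v)$. If $\widetilde{\mathfrak{b}}_\bullet=\widetilde{\mathfrak{c}}_\bullet$, equality of volumes is immediate. Conversely, suppose the volumes are equal and $\mathrm{vol}_R(v)>0$. Then we invoke the theorem of Blum--Liu--Qi (the $\mathfrak{m}$-filtration analogue of Rees' theorem) that two nested $\mathfrak{m}$-filtrations with equal positive volume have equal saturations. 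If $\mathrm{vol}_R(v)=0$, we must treat this separately, or note that the statement is intended for positive volume.

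I expect the main obstacle to be this converse direction. One has to check that the filtrations satisfy the linear-boundedness hypotheses of \cite{BLQ24}, for instance $\mathfrak{m}^{\lceil Ct\rceil}\subseteq\mathfrak{b}_t$, which follows from Izumi-type comparability of $\mu$ with $\mathrm{ord}_\mathfrak{m}$ together with $\varphi(\mathfrak{m})R$ being $\mathfrak{m}$-primary. One also has to pin down the exact form of the equality criterion.
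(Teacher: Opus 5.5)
Your proposal is correct and follows the paper's proof essentially step for step. The shared ingredients are the inclusion $\mathfrak{b}_t\subseteq\mathfrak{c}_t$, the identity $\ell_R(R/\varphi(\mathfrak{a}_t(\mu))R)=d\,\ell_R(R/\mathfrak{a}_t(\mu))$ from miracle flatness, and the filtration version of Rees's theorem \cite[Theorem~1.4]{BLQ24} for the equality case.

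One minor remark: $\mu(\mathfrak{m})=1$ already gives $\mathfrak{m}^{\lceil t\rceil}\subseteq\mathfrak{a}_t(\mu)$. Hence $\mathrm{vol}_R(\mu)\le 1$ is automatically finite, and the inclusion $\mathfrak{m}^{\lceil Ct\rceil}\subseteq\mathfrak{b}_t$ you flag needs no Izumi-type input.
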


\begin{proof}
If $h\in\mathfrak{a}_t(\mu)$, then $v(\varphi(h))=c\mu(h)\geq ct$, so $\mathfrak{b}_t\subseteq\mathfrak{c}_t$. Since $\varphi:R\to R$ is finite flat of rank $d$ by miracle flatness, the ideal inclusion gives
$$
\begin{aligned}
\ell_R\bigl(R/\mathfrak{a}_{ct}(v)\bigr)
&\leq\ell_R\bigl(R/\varphi(\mathfrak{a}_t(\mu))R\bigr)\\
&=d\,\ell_R\bigl(R/\mathfrak{a}_t(\mu)\bigr).
\end{aligned}
$$
Dividing by $(ct)^n/n!$ and taking the limit superior as $t\to\infty$ proves \eqref{eq: volume-ineq}.

The multiplicity limits of the two filtrations exist, and finite flatness gives
$$ e(\mathfrak{b}_\bullet)=d\,\mathrm{vol}_R(\mu),\qquad e(\mathfrak{c}_\bullet)=c^n\mathrm{vol}_R(v). $$
Consequently, equality in \eqref{eq: volume-ineq} is equivalent to $e(\mathfrak{b}_\bullet)=e(\mathfrak{c}_\bullet)$. Since $\mathfrak{b}_\bullet\subseteq\mathfrak{c}_\bullet$, the filtration version of Rees's theorem \cite[Theorem~1.4]{BLQ24} shows that this is equivalent to $\widetilde{\mathfrak{b}}_\bullet=\widetilde{\mathfrak{c}}_\bullet$.
\end{proof}

We now specialize to dimension two and use the notation of Subsection~\ref{subsec: valuative tree}. For $R=\mathcal{O}_{\mathbb{C}^2,0}$, the quantity $\mathrm{vol}_R(\nu)$ above is the volume $\mathrm{vol}(\nu)$ in \eqref{eq: vol-skewness}. If $f:(\mathbb{C}^2,0)\to(\mathbb{C}^2,0)$ is finite, then $f^*:R\to R$ is finite flat of rank $d(f)$ by miracle flatness. We now prove the key local statement.

\begin{theorem}\label{thm: balanced-local}
Let $f:(\mathbb{C}^2,0)\to(\mathbb{C}^2,0)$ be a finite superattracting germ. Assume that $d(f)=c_\infty(f)^2=q^2$, where $q>1$ is an integer. Then there exists a primitive divisorial valuation $v$ centered at the origin such that
\begin{equation}\label{eq: divisorial-eigen}
 (f^2)_*v=q^2v.
\end{equation}
\end{theorem}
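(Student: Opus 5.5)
The plan is to run through the Gignac--Ruggiero trichotomy (Proposition~\ref{prop: GR trichotomy}). In each case I either produce a divisorial valuation directly or use the volume inequality of Lemma~\ref{lem: local-volume} to rule the case out. In dimension two, \eqref{eq: vol-skewness} lets me rewrite Lemma~\ref{lem: local-volume} as follows: for every $\nu\in\mathcal{V}$ with finite skewness,
$$ \frac{c(f,\nu)^2}{\alpha(\nu)}\leq \frac{d(f)}{\alpha(f_\bullet\nu)}=\frac{q^2}{\alpha(f_\bullet\nu)}. $$
The hypothesis $d(f)=c_\infty(f)^2$ is what makes this inequality sharp along eigen-directions.

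\emph{Case (iii).} There is a maximal nondegenerate segment $I$ of quasi-monomial valuations fixed pointwise by $f_\bullet^2$, with $c(f^2,\nu)=q^2$ on $I$. By Proposition~\ref{prop: FJ-tree structure}(ii), divisorial points are dense in $I$, so I pick one, $\nu_E$. Then $(f^2)_*\nu_E=c(f^2,\nu_E)\,(f^2)_\bullet\nu_E=q^2\nu_E$. Multiplying by $b_E$ gives the primitive valuation $v=\mathrm{ord}_E$ satisfying \eqref{eq: divisorial-eigen}.

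\emph{Case (ii) with divisorial eigenvaluation.} If the eigenvaluation $\nu_\star$ is divisorial, then $f_*\nu_\star=q\nu_\star$ by Proposition~\ref{prop: FJ-local dynamics}(i). Hence $v=b_E\nu_\star$ satisfies $f_*v=qv$, and therefore $(f^2)_*v=q^2v$.

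\emph{Case (i).} Suppose the eigenvaluation $\nu_\star$ is an end. Proposition~\ref{prop: FJ-local dynamics}(ii) gives a segment $[\nu_0,\nu_\star)$ on which $c(f,\nu)=q$ and $f_\bullet\nu>\nu$. For $\nu$ in this segment (quasi-monomial, so $\alpha(\nu)<\infty$), the displayed inequality gives $\alpha(f_\bullet\nu)\leq\alpha(\nu)$. But skewness strictly increases along ordered segments, so $f_\bullet\nu>\nu$ forces $\alpha(f_\bullet\nu)>\alpha(\nu)$. This contradiction rules out case (i).

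\emph{Case (ii) with irrational eigenvaluation.} This is the step I expect to be the main obstacle. Here Proposition~\ref{prop: FJ-local dynamics}(iii) gives a modification and a fixed point of the lift at which $\widehat f$ is monomial, $(z,w)\mapsto(z^aw^b,z^cw^d)$. The matrix $M$ has spectral radius $q$ and a positive irrational $q$-eigenvector $(s_\star,t_\star)$, so its other eigenvalue $\lambda$ satisfies $\lambda\neq\pm q$ and $\det M=q\lambda$.

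I would apply the displayed inequality to the monomial valuations $\nu_{s,t}$ in this chart near $\nu_\star$. In the chart, $\widehat f_*$ sends the weight vector $(s,t)$ to $M^{\mathsf T}(s,t)$. The values $c(f,\nu_{s,t})$ and $\alpha(\nu_{s,t})$ are affine, respectively bilinear-rational, in $(s,t)$ once the fixed divisors through the point are fixed. A direct computation should then show that the ratio $c(f,\nu)^2\alpha(f_\bullet\nu)/\alpha(\nu)$ equals a fixed multiple of $|\det M|$ times a correction that tends to $1$ at $\nu_\star$.

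At $\nu_\star$ itself the ratio equals $q^2=d(f)$, so equality holds in Lemma~\ref{lem: local-volume}. Comparing with the chart computation should force $|\lambda|=q$. A nonnegative integer matrix whose two eigenvalues both have modulus $q$ and whose Perron eigenvalue is $q$ has $\lambda=\pm q$. If $\lambda=q$, the eigenvector can be taken rational. If $\lambda=-q$, then $M^2=q^2\mathrm{I}$, so every weight is $(f^2)_\bullet$-fixed. That produces a fixed segment of quasi-monomial valuations, contradicting uniqueness in case (ii). Either way the eigenvaluation cannot be irrational.

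If this chart computation proves too delicate, my fallback is to use the equality case of Lemma~\ref{lem: local-volume} at $\nu_\star$. Equality says that the saturated pushforward filtration $\widetilde{\mathfrak{b}}_\bullet$ coincides with $\mathfrak{a}_{q\bullet}(\nu_\star)$. From this I would try to deduce that $f_\bullet$ acts isometrically near $\nu_\star$, again yielding a fixed segment for $f_\bullet^2$.
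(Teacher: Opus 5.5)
You handle alternative (i), alternative (iii), and the divisorial subcase of (ii) exactly as the paper does. The gap is the irrational subcase of (ii), which you flag yourself, and your argument there does not work as written. The chart computation is never carried out. The claim that $c(f,\nu)^2\alpha(f_\bullet\nu)/\alpha(\nu)$ is ``a fixed multiple of $|\det M|$ times a correction tending to $1$'' is unsupported, and nothing explains why it would force $|\lambda|=q$.

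Equality in Lemma~\ref{lem: local-volume} at $\nu_\star$ is automatic, since $f_\bullet\nu_\star=\nu_\star$, $c(f,\nu_\star)=q$ and $d(f)=q^2$. So that equality carries no information. Moreover, $d(f)$ is the local degree of $f$ at the origin, whereas $|\det M|$ is the local degree of the lift at a point of the modification, and you give no relation between them. The fallback via saturated filtrations and an ``isometric'' action is not an argument.

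In the $\lambda=-q$ branch your contradiction is also invalid. A segment fixed pointwise by $f_\bullet^2$ does not contradict uniqueness of the eigenvaluation of $f$. For example, with $a=d=0$ and $b=c=q$, the map $f_\bullet$ acts on the segment as an involution whose only fixed point is the weight $(1,1)$.

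The missing idea is a one-line arithmetic observation, and it is exactly where the integrality of $q$ enters. The matrix $M$ has integer entries and $q=c_\infty(f)$ is an integer eigenvalue of it. Hence the $q$-eigenspace $\ker(M-qI)$ is the kernel of an integral matrix and is defined over $\mathbb{Q}$; equivalently, $\lambda=\mathrm{tr}\,M-q\in\mathbb{Z}$. Two cases remain:
\begin{itemize}
\item If this kernel is one-dimensional, the positive weight vector of $\nu_\star$ is proportional to a rational vector. So $\nu_\star$ has rational weight ratio and is divisorial, contradicting irrationality.
\item If the kernel is two-dimensional, then $M=qI$, and every monomial valuation in the chart cone satisfies $f_*\nu=q\nu$. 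This gives a nondegenerate segment of eigenvaluations of $f$, contradicting uniqueness in (ii).
\end{itemize}
Thus the situation you set up, a positive irrational $q$-eigenvector with $\lambda\neq\pm q$, is impossible from the outset. No volume computation is needed in this case; the volume lemma is only needed to exclude alternative (i).
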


\begin{proof}
We treat in order the three alternatives of Proposition~\ref{prop: GR trichotomy}.

\smallskip
\noindent\textbf{Case 1:} Proposition~\ref{prop: GR trichotomy}(i) applies, so $f$ has a unique eigenvaluation, which is an end of $\mathcal{V}$.
By Proposition~\ref{prop: FJ-local dynamics}(ii), there exists $\nu_0<\nu_\star$ such that $f_\bullet$ is order preserving on $[\nu_0,\nu_\star]$, one has $c(f,\nu)=c(f,\nu_\star)=c_\infty(f)=q$ and $f_\bullet\nu>\nu$ for every $\nu\in[\nu_0,\nu_\star)$.

Choose a quasi-monomial valuation $\nu$ strictly between $\nu_0$ and $\nu_\star$. Then $c(f,\nu)=q$ and $\alpha(f_\bullet\nu)>\alpha(\nu)$. Applying Lemma~\ref{lem: local-volume} with $d=q^2$ and $c=q$ gives $\mathrm{vol}(\nu)\leq\mathrm{vol}(f_\bullet\nu)$. By \eqref{eq: vol-skewness}, this is equivalent to $\alpha(f_\bullet\nu)\leq\alpha(\nu)$, a contradiction. Hence Case~(i) cannot occur.

\smallskip
\noindent\textbf{Case 2:} Proposition~\ref{prop: GR trichotomy}(ii) applies, so $f$ has a unique eigenvaluation, which is quasi-monomial.
By Proposition~\ref{prop: FJ-local dynamics}(i) and uniqueness, the distinguished eigenvaluation satisfies $f_*\nu_\star=c_{\infty}(f)\nu_\star=q\nu_\star$. It remains to show that $\nu_\star$ is divisorial. Suppose instead that it is irrational. By Proposition~\ref{prop: FJ-local dynamics}(iii), we have an integral exponent matrix $M$ whose spectral radius is $q$ and whose positive $q$-eigenvector gives the weights of $\nu_\star$.

Because $q\in\mathbb{Z}$, the matrix $M-qI$ has integral entries. If its kernel is one-dimensional, then it is defined over $\mathbb{Q}$, and the positive weight vector defining $\nu_\star$ is proportional to a positive rational vector. Thus $\nu_\star$ has rational weight ratio and is divisorial, contradicting its irrationality. If the kernel is two-dimensional, then $M=qI$, so every monomial valuation in the corresponding cone satisfies $f_*\nu=q\nu$. This gives a nondegenerate segment of eigenvaluations, contradicting uniqueness. Thus $\nu_\star$ is divisorial. Multiplying it by its generic multiplicity gives a primitive divisorial valuation $v$, which satisfies \eqref{eq: divisorial-eigen} by homogeneity and functoriality.

\smallskip
\noindent\textbf{Case 3:} Proposition~\ref{prop: GR trichotomy}(iii) applies, so there exists a nondegenerate segment $I$ of quasi-monomial valuations fixed pointwise by $f_\bullet^2$.
Let $I$ be a maximal nondegenerate segment fixed pointwise by $f_\bullet^2$. By Proposition~\ref{prop: FJ-tree structure}(ii), the divisorial points are dense in $I$; choose one of them, denoted by $\nu$. Since $f_\bullet^2\nu=\nu$, Proposition~\ref{prop: GR trichotomy}(iii) gives $(f^2)_*\nu=q^2\nu$. Multiplying by the generic multiplicity yields a primitive divisorial valuation $v$ satisfying \eqref{eq: divisorial-eigen}.
\end{proof}

\begin{remark}\label{rem: fixed-not-always-cinf}
The constancy statement in Proposition~\ref{prop: GR trichotomy}(iii) is essential. It is not true that every projectively fixed valuation of an arbitrary germ has multiplier $c_\infty$. For instance, the finite germ $(x,y)\mapsto(x^2,y^3)$ has two fixed coordinate curve valuations with raw multipliers $2$ and $3$, whereas its asymptotic attraction rate is $2$. What is used in Case~(iii) is the stronger, specific conclusion of the Gignac--Ruggiero fixed-segment alternative.
\end{remark}

\subsection{Equivariant extraction of a divisorial eigenvaluation}

Let $(R,\mathfrak{m})$ be a regular local ring. The Rees valuations of an $\mathfrak{m}$-primary ideal $I$ are the finitely many primitive divisorial valuations $v_i$ for which
$$ \overline{I^m}=\{h\in R:v_i(h)\geq m v_i(I)\text{ for every }i\}\qquad(m\geq 1). $$
If $v$ is the unique Rees valuation of $I$ and $b=v(I)$, then
\begin{equation}\label{eq: one-Rees}
 \overline{I^m}=\{h\in R:v(h)\geq mb\}
 \qquad(m\geq 1).
\end{equation}
When $\dim R=2$, Zariski's theory associates to every primitive divisorial valuation $v$ centered at $\mathfrak{m}$ a simple complete $\mathfrak{m}$-primary ideal whose unique Rees valuation is $v$; see \cite[Appendix~5]{ZS}. In dimension at least three, the analogous conclusion need not hold: even a special $*$-simple complete ideal associated with a single infinitely near point can have more than one Rees valuation \cite[Theorem~6.8]{HK}.

\begin{theorem}\label{thm: equivariant-extraction}
Let $X$ be a smooth projective variety of dimension $n\geq 2$, $f:X\to X$ a $q$-polarized endomorphism, and $p\in X$ a point satisfying $f^{-1}(p)=\{p\}$.
Suppose that $v$ is a primitive divisorial valuation centered at $p$ satisfying
\begin{equation}\label{eq: eigen-global-extraction}
 f_*v=qv.
\end{equation}
Assume moreover that $v$ is the unique Rees valuation of an $\mathfrak{m}_p$-primary ideal $I\subset\mathcal{O}_{X,p}$. Then there exist a normal $\mathbb{Q}$-factorial projective variety $Y$, a projective birational morphism $\pi:Y\to X$, and a finite $q$-polarized endomorphism $g:Y\to Y$ with the following properties:
\begin{enumerate}[label=\textup{(\roman*)},leftmargin=2.2em]
\item $\bigl(\pi^{-1}(p)\bigr)_{\mathrm{red}}=\mathrm{Exc}(\pi)=E$, where $E$ is a prime divisor satisfying $\mathrm{ord}_E=v$;
\item $\pi\circ g=f\circ\pi$;
\item $g^*E=qE$ as $\mathbb{Q}$-Cartier divisors.
\end{enumerate}
Moreover, if $b=v(I)$, then $bE$ is Cartier and $-E$ is $\pi$-ample as a $\mathbb{Q}$-Cartier divisor.
\end{theorem}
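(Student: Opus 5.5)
We construct $Y$ as the normalized blow-up of $I$. Extend $I$ to a coherent ideal sheaf $\mathcal{I}\subset\mathcal{O}_X$ that equals $\mathcal{O}_X$ away from $p$; this is possible because $I$ is $\mathfrak{m}_p$-primary. Let
$$Y:=\mathrm{Proj}_X\bigoplus_{m\geq0}\overline{\mathcal{I}^m},$$
which is the normalization of the blow-up of $\mathcal{I}$. Since $R=\mathcal{O}_{X,p}$ is regular, hence normal and excellent, the integral-closure Rees algebra is finitely generated. Thus $\pi:Y\to X$ is projective birational with $Y$ normal, and it is an isomorphism over $X\setminus\{p\}$.

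The exceptional divisors of a normalized blow-up are exactly the divisors computing the Rees valuations. Since $v$ is the only Rees valuation, $\pi^{-1}(p)_{\mathrm{red}}=E$ is a single prime divisor with $\mathrm{ord}_E=v$. Moreover $\mathcal{I}\mathcal{O}_Y=\mathcal{O}_Y(-bE)$ is invertible and $\pi$-ample, where $b=v(I)$. This gives (i) and the final assertion.

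For $\mathbb{Q}$-factoriality, any Weil divisor on $Y$ is a strict transform $\pi^{-1}_*D$ plus a multiple of $E$. Since $X$ is smooth, $\pi^*D=\pi^{-1}_*D+\mathrm{ord}_E(D)E$ is Cartier. Also $bE$ is Cartier. Hence every Weil divisor is $\mathbb{Q}$-Cartier.

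To construct $g$, we show that $f$ preserves the graded algebra in the following sense. Using \eqref{eq: one-Rees}, we have $\overline{I^m}=\mathfrak{a}_{mb}(v)$. Since $f^{-1}(p)=\{p\}$, $f^*$ induces a finite local map $\mathcal{O}_{X,p}\to\mathcal{O}_{X,p}$. By \eqref{eq: eigen-global-extraction}, $v(f^*h)=qv(h)$, and therefore
$$f^*\mathfrak{a}_{mb}(v)\subseteq\mathfrak{a}_{qmb}(v).$$
Consequently $f^*\overline{\mathcal{I}^m}\cdot\mathcal{O}_X\subseteq\overline{\mathcal{I}^{qm}}$, globally as well as at $p$, because both sheaves are trivial away from $p$ and $f^{-1}(p)=\{p\}$. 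This yields a graded homomorphism from the Rees algebra to its $q$-th Veronese, compatible with $f$, and hence a rational map $g:Y\dashrightarrow Y$ lifting $f$.

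It is cleaner to argue with valuations. Since $f$ is finite, $g:=\pi^{-1}\circ f\circ\pi$ is a rational map that is regular off $E$. To see that it is a morphism near $E$, we check that $(f\circ\pi)^{-1}\mathcal{I}\cdot\mathcal{O}_Y$ is invertible. This pullback is generated by $f^*I$, and its integral closure is determined by $v$. The point is that $f^*I\cdot R$ has $v$ as its unique Rees valuation, with $v(f^*I)=qb$. Indeed, the Rees valuations of $f^*I$ are the valuations $w$ with $f_*w$ proportional to $v$, and the assumption $f^{-1}(p)=\{p\}$ together with $f_*v=qv$ identifies them.

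Establishing this uniqueness is the main obstacle. It needs the fact that a valuation $w$ with $f_*w$ proportional to $v$ is unique up to scaling among those centered at $p$. In general finitely many valuations lie over $v$ under a finite map, and I would try to rule out extra ones using total invariance of $E$ through degree counting. Concretely, $f$ induces a finite extension of function fields of degree $q^n$. The ramification index of $v$ over $v$ is $q$, and the residue degree is at most $q^{n-1}$, because $f$ restricted to the center restricts to a degree $q^{n-1}$ map of $E$ once $g$ is defined generically along $E$. Comparing with $\sum e_if_i=q^n$ then forces $v$ to be the only valuation lying over $v$.

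Given this, $g$ is a finite morphism, since $g$ is quasi-finite and proper over the finite map $f$. Then (ii) holds by construction. For (iii), $g^*\mathcal{O}_Y(-bE)=\mathcal{I}\circ f\cdot\mathcal{O}_Y$ has order $qb$ along the unique divisor $E$, so $g^*E=qE$. Finally, $g$ is polarized: $H:=\pi^*A-\epsilon E$ is ample for $A$ ample on $X$ and small $\epsilon>0$, and $g^*H=qH$ whenever $f^*A\sim qA$.
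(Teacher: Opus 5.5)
\textbf{Gap: the uniqueness of the valuation over $v$ is never established.} You correctly identify the main obstacle: showing that $v$ is the only valuation (up to scaling) lying over $v$. Equivalently, $v$ must be the unique Rees valuation of $f^*I$, i.e.\ $\overline{f^*I}=\overline{I^q}$. Your degree-counting sketch does not prove this, for three reasons.
\begin{itemize}
\item It is circular. The residue degree of $v$ over $v$ is $\deg(g|_E)$, and you simply assert that this equals $q^{n-1}$. That value is only available once you know $g$ is a morphism with $g^{-1}(E)=E$ and $g^*E=qE$; the paper derives $\deg h=q$ in the surface case only after these facts are in hand. Nothing in your argument computes $\deg(g|_E)$ for a merely rational $g$, and ``total invariance of $E$'' is likewise unavailable before $g$ is regular along $E$.
\item The inequality runs the wrong way. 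From $e_1=q$ and $f_1\le q^{n-1}$ you only get $e_1f_1\le q^n=\sum_i e_if_i$, which is perfectly compatible with further valuations over $v$. Forcing uniqueness would require $f_1\ge q^{n-1}$.
\item The missing ingredient is a multiplicity comparison, which is how the paper proceeds. Since $f^{-1}(p)=\{p\}$, miracle flatness makes $f^*\colon R\to R$ finite flat of rank $q^n$. Hence $\ell_R(R/f^*J)=q^n\ell_R(R/J)$ for every $\mathfrak{m}_p$-primary $J$, and so $e(f^*I)=q^ne(I)=e(I^q)=e(\overline{I^q})$. You already have $f^*I\subseteq\overline{I^q}$. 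Since $R$ is regular, hence formally equidimensional, Rees's multiplicity theorem gives $\overline{f^*I}=\overline{I^q}=\mathfrak{a}_{qb}(v)$. This is exactly the statement you needed.
\end{itemize}

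\textbf{With that step supplied, your route closes.} Since $\overline{f^*I}=\overline{I^q}$, the ideal $f^*I$ is a reduction of $\overline{I^q}$. Also $\overline{I^q}\,\mathcal{O}_Y=\mathcal{O}_Y(-qbE)$, because $I^q\mathcal{O}_Y=\mathcal{O}_Y(-qbE)$ and $Y$ is normal. It follows that $f^*I\cdot\mathcal{O}_Y=\mathcal{O}_Y(-qbE)$ is invertible. By the universal property, $f\circ\pi$ then factors through $\mathrm{Bl}_{\mathcal{I}}X$, and by normality through $Y$; this gives $g$, and your computation of (iii) follows. This universal-property argument is a legitimate alternative to the paper's construction. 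The paper instead identifies the normalization of $X\times_{f,X}\mathrm{Bl}_{\mathcal{I}}X\simeq\mathrm{Bl}_{f^*\mathcal{I}}X$ with $\mathrm{Proj}_X\bigoplus_m\overline{\mathcal{I}^{qm}}\simeq Y$.

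You still owe a word on finiteness of $g$. ``Quasi-finite and proper over the finite map $f$'' is not automatic along $E$. One fix: if $g$ contracted a curve $C\subset E$, then $0=g^*(bE)\cdot C=qb\,E\cdot C$, contradicting the $\pi$-ampleness of $-E$.
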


\begin{proof}
Set $R:=\mathcal{O}_{X,p}$, $\mathfrak{m}:=\mathfrak{m}_p$, and $b:=v(I)$. For every $h\in I$, the eigenrelation \eqref{eq: eigen-global-extraction} gives $v(f^*h)=(f_*v)(h)=qv(h)\geq qb$, so \eqref{eq: one-Rees} yields $f^*I\subseteq\overline{I^q}$. Moreover, $f^*I$ is $\mathfrak{m}$-primary because $f^{-1}(p)=\{p\}$.

Since $f^{-1}(p)=\{p\}$, the local homomorphism $f^*:R\to R$ is finite flat of rank $q^n$ by miracle flatness. Hence $\ell_R(R/f^*J)=q^n\ell_R(R/J)$ for every $\mathfrak{m}$-primary ideal $J\subset R$. Applying this to the powers of $J$ gives $e(f^*J)=q^ne(J)$, while $e(I^q)=q^ne(I)$. Since integral closure preserves Hilbert--Samuel multiplicity \cite{Rees},
$$ e(f^*I)=q^ne(I)=e(I^q)=e(\overline{I^q}). $$
Since $R$ is regular, it is formally equidimensional, so Rees's multiplicity theorem \cite{Rees} (see also \cite[Theorem~11.3.1]{HS}) applies to the inclusion $f^*I\subseteq\overline{I^q}$ and yields
\begin{equation}\label{eq: integral-closure-equality}
 \overline{f^*I}=\overline{I^q}.
\end{equation}
Taking powers of these integrally equivalent ideals and then taking integral closures gives
$$ \overline{(f^*I)^m}=\overline{I^{qm}}\qquad(m\geq 1). $$

Let $\mathcal{I}\subseteq\mathcal{O}_X$ be the coherent ideal sheaf whose stalk at $p$ is $I$ and which equals $\mathcal{O}_X$ on $X\setminus\{p\}$. Since $f^{-1}(p)=\{p\}$, the preceding equality globalizes stalkwise to
\begin{equation}\label{eq: sheaf-power-closure}
 \overline{(f^*\mathcal{I})^m}=\overline{\mathcal{I}^{qm}}
 \qquad(m\geq 1).
\end{equation}

Define the normalized blow-up
$$ Y:=\Proj_X\bigoplus_{m\geq 0}\overline{\mathcal{I}^m}, $$
and let $\pi:Y\to X$ be the structure morphism. Then $Y$ is the normalization of $\mathrm{Bl}_{\mathcal{I}}X$, and hence it is normal and projective.

On $Y$, we have $\mathcal{I}\mathcal{O}_Y=\mathcal{O}_Y(-D)$
for an effective Cartier divisor $D$ with $\mathrm{Supp}(D)=\pi^{-1}(p)_{\mathrm{red}}$. The prime components of $D$ correspond to the Rees valuations of $I$, and the coefficient of the component associated with a Rees valuation $w$ is $w(I)$. Since $v$ is the unique Rees valuation, there is a unique prime divisor $E$ satisfying $\mathrm{ord}_E=v$, and $D=bE$.
Consequently, $\pi^{-1}(p)_{\mathrm{red}}=\mathrm{Exc}(\pi)=E$,
and
\begin{equation}\label{eq: I-on-Y}
 \mathcal{I}\mathcal{O}_Y=\mathcal{O}_Y(-bE).
\end{equation}
Since $-D=-bE$ is $\pi$-ample, $bE$ is Cartier and $-E$ is $\pi$-ample as a $\mathbb{Q}$-Cartier divisor.

We claim that the variety $Y$ is $\mathbb{Q}$-factorial. Indeed, let $\Gamma\subset Y$ be a prime Weil divisor. If $\Gamma=E$, then $\Gamma$ is $\mathbb{Q}$-Cartier. Otherwise, $\Gamma_X:=\pi_*\Gamma$ is a prime divisor on the smooth variety $X$, and hence is Cartier. Since $E$ is the only $\pi$-exceptional prime divisor, there exists an integer $a$ such that $\pi^*\Gamma_X=\Gamma+aE$. It follows that $\Gamma=\pi^*\Gamma_X-aE$ is $\mathbb{Q}$-Cartier.

We next construct the lift of $f$. Let $\nu:Y\to\mathrm{Bl}_{\mathcal{I}}X$ be the normalization morphism, and consider the fibre product $X\times_{f,X}\mathrm{Bl}_{\mathcal{I}}X$, taken with respect to $f$ and the blow-up morphism. Its second projection to $\mathrm{Bl}_{\mathcal{I}}X$ is finite and surjective, being a base change of $f$. Since $f$ is finite flat by miracle flatness, flat base change for blow-ups gives a canonical isomorphism
$$ X\times_{f,X}\mathrm{Bl}_{\mathcal{I}}X\simeq\mathrm{Bl}_{f^*\mathcal{I}}X, $$
under which the first projection is the blow-up morphism. In particular, the fibre product is integral. Let $Y_f$ be its normalization. By the normalized Rees algebra description and \eqref{eq: sheaf-power-closure},
$$
\begin{aligned}
Y_f
&\simeq\Proj_X\bigoplus_{m\geq 0}\overline{(f^*\mathcal{I})^m}
\simeq\Proj_X\bigoplus_{m\geq 0}\overline{\mathcal{I}^{qm}}\\
&\simeq\Proj_X\bigoplus_{m\geq 0}\overline{\mathcal{I}^m}=Y.
\end{aligned}
$$
The last isomorphism follows from the invariance of relative $\Proj$ under passage to the $q$-th Veronese subalgebra; in particular, the resulting isomorphism $Y_f\simeq Y$ is over $X$.

The normalization map from $Y_f$ to the fibre product, followed by the second projection to $\mathrm{Bl}_{\mathcal{I}}X$, is finite and surjective. Since $Y_f$ is normal, this composite factors uniquely through $\nu$. Under the preceding identification over $X$, the resulting morphism defines a dominant endomorphism $g:Y\to Y$. The Cartesian square then gives the semiconjugacy $\pi\circ g=f\circ\pi$, summarized by
$$
\begin{tikzcd}[column sep=large,row sep=large]
Y \arrow[r,"g"] \arrow[d,"\pi"'] & Y \arrow[d,"\pi"] \\
X \arrow[r,"f"'] & X.
\end{tikzcd}
$$

Since $\nu\circ g$ is finite, $g$ is quasi-finite. It is also proper because $Y$ is projective, and hence finite. Since $g$ is dominant, it is surjective.

Using $\pi^{-1}(p)_{\mathrm{red}}=E$ and $f^{-1}(p)=\{p\}$, we obtain
$$ g^{-1}(E)=g^{-1}\bigl(\pi^{-1}(p)\bigr)=\pi^{-1}\bigl(f^{-1}(p)\bigr)=E $$
set-theoretically. Since $bE$ is Cartier, there is an integer $c>0$ such that $g^*(bE)=cE$ as Weil divisors. Pulling back \eqref{eq: I-on-Y} and taking the order along $E$ yields
$$ c=v(f^*I)=(f_*v)(I)=qv(I)=qb. $$
Therefore $g^*E=qE$ as $\mathbb{Q}$-Cartier divisors.

Finally, let $H$ be an ample Cartier divisor satisfying $f^*H\sim qH$. Since $-bE$ is $\pi$-ample, the Cartier divisor $L:=a\pi^*H-bE$ is ample for every sufficiently large integer $a$. Since $g^*(bE)=qbE$, we obtain
$$ g^*L\sim a\pi^*f^*H-g^*(bE)\sim aq\pi^*H-qbE=qL. $$
Thus $g$ is $q$-polarized.
\end{proof}

For surfaces, the Rees-valuation hypothesis in Theorem~\ref{thm: equivariant-extraction} is automatic.

\begin{corollary}\label{cor: equivariant-extraction-surface}
Let $X$ be a smooth projective surface, $f:X\to X$ a $q$-polarized endomorphism, $p\in X$ a point satisfying $f^{-1}(p)=\{p\}$, and $v$ a primitive divisorial valuation centered at $p$. Assume that
$$ f_*v=qv. $$
Then there exist a normal $\mathbb{Q}$-factorial projective surface $Y$, a projective birational morphism $\pi:Y\to X$, a finite $q$-polarized endomorphism $g:Y\to Y$, and a prime divisor $E\subset Y$ such that $\pi\circ g=f\circ\pi$, $(\pi^{-1}(p))_{\mathrm{red}}=\mathrm{Exc}(\pi)=E$, $\mathrm{ord}_E=v$, and
$$ g^{-1}(E)=E,\qquad g^*E=qE. $$
Moreover, $E$ is $\mathbb{Q}$-Cartier and $-E$ is $\pi$-ample.
\end{corollary}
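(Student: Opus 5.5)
The plan is to apply Theorem~\ref{thm: equivariant-extraction} with $n=2$. The only hypothesis of that theorem not already assumed in the corollary is that $v$ is the unique Rees valuation of some $\mathfrak{m}_p$-primary ideal $I\subset\mathcal{O}_{X,p}$. The work therefore consists of producing such an $I$ from $v$, and then translating conclusions (i)--(iii) of the theorem into the form stated in the corollary.

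\textbf{Producing $I$.} Since $X$ is a smooth surface, $R:=\mathcal{O}_{X,p}$ is a two-dimensional regular local ring with residue field $\mathbb{C}$. The valuation $v$ is primitive divisorial and centered at $\mathfrak{m}_p$, so it corresponds to an exceptional prime divisor $E_v$ obtained after a finite sequence of point blow-ups over $p$. Zariski's theory of complete ideals \cite[Appendix~5]{ZS}, as recalled just before Theorem~\ref{thm: equivariant-extraction}, attaches to $E_v$ a simple complete $\mathfrak{m}_p$-primary ideal $I_v$ whose unique Rees valuation is $v$. One concrete choice is $I_v=\mathfrak{a}_t(v)$ for a suitable $t$: take $t=v(\phi)$, where $\phi$ is a curvette through a general point of $E_v$. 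I will cite this result rather than reprove it. This is the only point where $\dim X=2$ is used, and the counterexample of \cite{HK} in higher dimension shows why. Since the hypotheses $f^{-1}(p)=\{p\}$ and $f_*v=qv$ are the same in both statements, Theorem~\ref{thm: equivariant-extraction} now applies with $I=I_v$.

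\textbf{Reading off the conclusions.} Theorem~\ref{thm: equivariant-extraction} gives a normal $\mathbb{Q}$-factorial projective surface $Y$, the morphism $\pi$, a finite $q$-polarized $g$, and a prime divisor $E$.
\begin{enumerate}[label=\textup{(\alph*)},leftmargin=2.5em]
\item Conclusion (i) gives $(\pi^{-1}(p))_{\mathrm{red}}=\mathrm{Exc}(\pi)=E$ and $\mathrm{ord}_E=v$.
\item Conclusion (ii) gives $\pi\circ g=f\circ\pi$.
\item Conclusion (iii) gives $g^*E=qE$.
\item For the set-theoretic equality $g^{-1}(E)=E$, either take supports in $g^*E=qE$, or repeat the computation in the proof of the theorem: $g^{-1}(\pi^{-1}(p))=\pi^{-1}(f^{-1}(p))=\pi^{-1}(p)$.
\item $E$ is $\mathbb{Q}$-Cartier because $bE$ is Cartier, and $-E$ is $\pi$-ample by the final assertion of the theorem.
\end{enumerate}

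\textbf{Main obstacle.} The only substantive step is the existence of $I$. The delicate point is making sure the cited two-dimensional result applies to an arbitrary primitive divisorial valuation centered at the closed point, including ones reached through many infinitely near points, and not only to $\mathrm{ord}_p$. I expect this to be handled by Zariski's bijection between simple complete $\mathfrak{m}$-primary ideals and infinitely near points, together with the standard fact that a simple complete ideal in a two-dimensional regular local ring has exactly one Rees valuation. Everything else is bookkeeping.
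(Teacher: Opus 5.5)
Your proposal is correct and is the paper's argument: the paper likewise invokes Zariski's theory \cite[Appendix~5]{ZS} to get a simple complete $\mathfrak{m}_p$-primary ideal whose unique Rees valuation is $v$, and then applies Theorem~\ref{thm: equivariant-extraction}. Your extra bookkeeping, and your explicit choice $I=\mathfrak{a}_{v(\phi)}(v)$ with $\phi$ a curvette of $E_v$ (this is indeed the simple complete ideal attached to $E_v$), are consistent with that proof.
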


\begin{proof}
By Zariski's theory, $v$ is the unique Rees valuation of a simple complete $\mathfrak{m}_p$-primary ideal; see \cite[Appendix~5]{ZS}. The assertion follows from Theorem~\ref{thm: equivariant-extraction}.
\end{proof}

\begin{remark}\label{rem: normalized-essential}
The normalized blow-up is essential. Equation \eqref{eq: integral-closure-equality} is an equality of integral closures, not necessarily an equality $f^*I=I^q$ of ideals. Consequently, the ordinary universal property of $\mathrm{Bl}_I X$ is not sufficient by itself.
\end{remark}

As a consequence, we prove the following:

\begin{proposition}\label{prop: crepant-descent}
Assume the hypotheses of Corollary~\ref{cor: equivariant-extraction-surface}. Then there exists an integer $s\geq 1$ such that $(X,\frac{R_{f^s}}{q^s-1})$ is log canonical in a neighborhood of $p$.
\end{proposition}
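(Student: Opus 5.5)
Our plan is to work on the equivariant model $\pi:Y\to X$ from Corollary~\ref{cor: equivariant-extraction-surface}, where $g^*E=qE$ and $E$ is the unique exceptional curve over $p$. We then transport the normalized ramification divisor to $Y$ and compare it with a pair of the form $(Y,E+\Delta_Y)$.

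Write $K_Y=\pi^*K_X+aE$ with $a=A_X(v)-1$. The ramification formula for the finite map $g$ on the normal surface $Y$ reads $K_Y=g^*K_Y+R_g$. Because $g^*E=qE$, the coefficient of $E$ in $R_g$ is $q-1$, so $E\subseteq\mathrm{Supp}\,R_g$. Iterating, $R_{g^s}=\sum_{i<s}(g^i)^*R_g$ has coefficient $q^s-1$ along $E$. Comparing the ramification formulas on $X$ and $Y$ through $\pi\circ g^s=f^s\circ\pi$ gives
\[
K_Y+\frac{R_{g^s}}{q^s-1}=\pi^*\Bigl(K_X+\frac{R_{f^s}}{q^s-1}\Bigr),
\]
since both sides are $\mathbb{Q}$-linearly trivial relative to the crepant comparison. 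To check this, note that $\pi^*R_{f^s}-R_{g^s}$ is supported on $E$, and its $E$-coefficient is computed as $v(R_{f^s})-(q^s-1)=(q^s-1)(a+1)-(q^s-1)+\dots$; the exponent bookkeeping uses $v(\mathrm{Jac}(f^s))=A_X((f^s)_*v)-A_X(v)$-type identities, that is, $A_X(q^sv)=q^sA_X(v)$ against $A_X(v)+v(R_{f^s})$. So the pair $(X,R_{f^s}/(q^s-1))$ is lc near $p$ if and only if $(Y,\Delta_s)$ is lc near $E$, where $\Delta_s:=R_{g^s}/(q^s-1)=E+\Delta_s'$ and $\Delta_s'$ contains no $E$.

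To show $(Y,E+\Delta_s')$ is lc near $E$, we use inversion of adjunction on the $\mathbb{Q}$-factorial surface $Y$ (\cite[Thm.~5.50]{KM98}). It reduces the question to showing that $(E^\nu,\mathrm{Diff}_{E^\nu}(\Delta_s'))$ is lc, i.e.\ that all coefficients of the different are at most $1$. Here $E^\nu$ is the normalization of $E$, and $g|_E:E\to E$ is a finite endomorphism of degree $q$ by the projection formula, since $g^*E=qE$ and $\deg g=q^2$. By adjunction, $(K_Y+E+\Delta_s')|_{E^\nu}=K_{E^\nu}+\mathrm{Diff}$ is the pullback of a trivial class, so $\deg(K_{E^\nu}+\mathrm{Diff})=0$. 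Since $E^\nu$ carries an endomorphism of degree $q>1$, it is $\mathbf{P}^1$ or elliptic. The elliptic case forces $\mathrm{Diff}=0$. In the $\mathbf{P}^1$ case the different has total degree $2$, and we must exclude a point of coefficient $>1$.

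The main obstacle is this last step. The restricted pair is $h$-invariant for $h=g^s|_{E^\nu}$, in the sense that $K_{E^\nu}+\mathrm{Diff}_s=h^*(K_{E^\nu}+\mathrm{Diff}_s)$ with $\mathrm{Diff}_s$ built from $R_h/(q^s-1)$ plus the different of $(Y,E)$ coming from the singularities of $Y$ along $E$. We plan to use that the different of $(Y,E)$ has finitely many points, with coefficients of the form $1-1/m$. Passing to an iterate $s$ makes these points and their preimage structure periodic. A point of coefficient $>1$ would then be totally invariant with local degree $q^s$, and a one-variable computation shows that its coefficient in $R_{h}/(q^s-1)$ is exactly $1$, contributing no excess. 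This is the one-dimensional analogue of Gongyo's conjecture on $\mathbf{P}^1$, where the bound holds after an iterate. Choosing $s$ as this common iterate gives lc of the different, hence lc of $(Y,\Delta_s)$ near $E$ by inversion of adjunction. Crepant descent then gives lc of $(X,R_{f^s}/(q^s-1))$ near $p$.
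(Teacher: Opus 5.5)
Your skeleton is the paper's: extraction, the crepant identity $R_{g^s}=\pi^*R_{f^s}-a(q^s-1)E$ (which is correct), adjunction to $E$, a one-dimensional dynamical statement on $C=E^\nu\cong\mathbf{P}^1$, and lc inversion of adjunction. The gap is that the decisive curve step is not established.

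First, you assert that the different $\Theta$ of $(Y,E)$ has coefficients of the form $1-1/m$, and nothing about the extraction gives this. For example, blow up the cusp $y^2=x^3$ three times, then a free point of the last exceptional curve, and let $v$ be the order along this fourth curve. Then $Y$ contracts the chain $[3,2,2]$, and $E$ meets its \emph{middle} curve. That curve has discrepancy $-8/7$ with respect to $(Y,E)$, so $\Theta$ has a coefficient $8/7>1$. No iterate can repair this, since the full different is $\geq\Theta$. In the proposition, $(Y,E)$ is lc only because of the dynamics: the paper applies \cite[Corollary~3.3]{BH14} to the reduced totally invariant divisor $E$ of the polarized $g$.

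Second, the restricted boundary is misdescribed. What you need is the log ramification formula $K_Y+E=(g^s)^*(K_Y+E)+B_s$, where $B_s=R_{g^s}-(q^s-1)E\geq 0$ does not contain $E$. Restricting it gives $K_C+\Theta=(h^s)^*(K_C+\Theta)+\nu^*(B_s|_E)$. Hence the different of $(Y,E+B_s/(q^s-1))$ is $\Theta+R_{\Theta,h^s}/(q^s-1)$, with $R_{\Theta,h^s}=\Theta+R_{h^s}-(h^s)^*\Theta\geq 0$.

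Your identity $K_{E^\nu}+\mathrm{Diff}_s=h^*(K_{E^\nu}+\mathrm{Diff}_s)$ holds only for degree-zero classes and carries no information. Your description ``$R_h/(q^s-1)$ plus the different of $(Y,E)$'' drops the term $-(h^s)^*\Theta/(q^s-1)$. Without that term the degree would be $2+\deg\Theta$, and at a totally invariant point with $\theta_x>0$ the coefficient would be $1+\theta_x$, so your ``no excess'' computation fails.

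You also give no reason why a point of coefficient $>1$ must be totally invariant. Periodicity is not the mechanism, since critical orbits of $h$ need not be finite. With the correct boundary, the log discrepancy at $x$ is $\bigl(q^s(1-\theta_x)-e_x(h^s)(1-\theta_{h^s(x)})\bigr)/(q^s-1)$. Effectivity of $R_{\Theta,h}$, together with $\Theta\leq 1$, makes $\{\theta=1\}$ totally invariant. At totally invariant fixed points the numerator vanishes. Elsewhere you need a uniform bound $e_x(h^s)\leq C\rho^s$ with $\rho<q$, and a positive lower bound for $1-\theta_x$ on $\{\theta<1\}$.

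This is exactly the log one-dimensional result \cite[Theorem~1.7]{CZ} that the paper invokes. The boundary-free $\mathbf{P}^1$ statement does not suffice, because $\Theta\neq 0$ in general. Finally, \cite[Theorem~5.50]{KM98} is the plt/klt form and assumes $S$ is Cartier in codimension two, which fails at the singular points of $Y$ on $E$. You need lc inversion of adjunction with the different, e.g.\ \cite{Kaw07}.
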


\begin{proof}
Let $\pi: Y\to X$, $g: Y\to Y$, and $E$ be as in Corollary~\ref{cor: equivariant-extraction-surface}. Since $X$ is smooth and $E$ is the only $\pi$-exceptional prime, there exists $a\in\mathbb{Q}$ such that
\begin{equation}\label{eq: KY-discrepancy}
 K_Y=\pi^*K_X+aE.
\end{equation}
The divisor $E$ is $\mathbb{Q}$-Cartier by Corollary~\ref{cor: equivariant-extraction-surface}, so $K_Y+E$ is $\mathbb{Q}$-Cartier. Moreover, $E$ is reduced and totally invariant under the polarized endomorphism $g$. Hence \cite[Corollary~3.3]{BH14} shows that $(Y,E)$ is log canonical.

The ramification index of $g$ along $E$ is $q$. Thus, for every $s\geq 1$, the divisor $B_s:=R_{g^s}-(q^s-1)E$ is effective and does not contain $E$, and
\begin{equation}\label{eq: logarithmic ramification along E}
 K_Y+E=(g^s)^*(K_Y+E)+B_s.
\end{equation}
Let $\nu:C\to E$ be the normalization and define $\Theta$ to be the $\mathbb{R}$-divisor satisfying $K_C+\Theta=\nu^*((K_Y+E)|_E)$ (see \cite[Page 1, second paragraph]{Kaw07}).
By adjunction (see the forward implication of \cite[Theorem]{Kaw07}), $(C,\Theta)$ is log canonical. The restriction of $g$ to $E$ lifts to a finite morphism $h: C\to C$. Since $g$ is $q$-polarized and $\dim Y=2$, one has $\deg g=q^2$. Moreover, $g^{-1}(E)=E$ and $g^*E=qE$, so the ramification index of the unique prime divisor $E$ lying over $E$ is $q$. The fundamental equality therefore gives
$$ q^2=\deg g=q[\mathbb{C}(E):h^*\mathbb{C}(E)]=q\deg h. $$
Thus $\deg h=q$.

By \eqref{eq: logarithmic ramification along E}, the divisor $B_s$ is $\mathbb{Q}$-Cartier. Since $B_s$ is effective and does not contain $E$, the divisor $\nu^*(B_s|_E)$ is effective. Using $\nu\circ h^s=(g^s|_E)\circ\nu$, restricting \eqref{eq: logarithmic ramification along E} to $E$, and then pulling back to $C$, we obtain
$$ K_C+\Theta=(h^s)^*(K_C+\Theta)+\nu^*(B_s|_E). $$
Comparing this equality with the Hurwitz formula $K_C=(h^s)^*K_C+R_{h^s}$ gives
$$ R_{\Theta,h^s}:=\Theta+R_{h^s}-(h^s)^*\Theta=\nu^*(B_s|_E)\geq 0. $$
Consequently, adjunction for the pair $(Y,E+B_s/(q^s-1))$ gives
$$ \nu^*((K_Y+E+\frac{B_s}{q^s-1})|_E)=K_C+\Theta+\frac{R_{\Theta,h^s}}{q^s-1}. $$
Since $h$ is $q$-polarized, \cite[Theorem~1.7]{CZ} (see also \cite[Remark~2.8]{BG17} when $\Theta=0$) implies that $(C,\Theta+\frac{R_{\Theta,h^s}}{q^s-1})$ is lc for some integer $s>0$.

Inversion of adjunction \cite[Theorem]{Kaw07} now shows that $(Y,E+\frac{B_s}{q^s-1})$, equivalently $(Y,\frac{R_{g^s}}{q^s-1})$, is log canonical in a neighborhood of $E$. From \eqref{eq: KY-discrepancy}, the semiconjugacy $\pi\circ g^s=f^s\circ\pi$, and $(g^s)^*E=q^sE$, we obtain $R_{g^s}=\pi^*R_{f^s}-a(q^s-1)E$.
Therefore
\begin{equation}\label{eq: crepant-identity}
 K_Y+\frac{R_{g^s}}{q^s-1}
 =\pi^*(K_X+\frac{R_{f^s}}{q^s-1}).
\end{equation}
This implies that the pair downstairs is log canonical in a neighborhood of $p$, proving the proposition.
\end{proof}

\begin{remark}
We provide an alternative proof of the proposition. Let $\pi:Y\to X$, $g:Y\to Y$, and $E$ be as in Corollary~\ref{cor: equivariant-extraction-surface}. Since $g^{-1}(E)=E$, the $\mathbb{Q}$-factorial surface $Y$ contains an inverse-periodic prime divisor. Hence \cite[Theorem~1.16(2)]{CZ}, applied to the $g$-pair $(Y,0)$, gives an integer $s\geq 1$ such that $(Y,\frac{R_{g^s}}{q^s-1})$
is log canonical. The crepant identity \eqref{eq: crepant-identity} then implies that $(X,\frac{R_{f^s}}{q^s-1})$ is log canonical, and hence it is log canonical in a neighborhood of $p$.
\end{remark}

\subsection{Proof of Theorem~\ref{thm: main projective plane}}

As mentioned in the introduction, it suffices to treat $X=\mathbf{P}^2$ in order to prove Gongyo's conjecture for smooth projective surfaces.

\begin{proof}
\noindent\textbf{Step 1.}
Let $H$ be the divisor of a line in $\mathbf{P}^2$. For every $s\geq 1$, the ramification divisor formula gives
$$ R_{f^s}\sim K_{\mathbf{P}^2}-(f^s)^*K_{\mathbf{P}^2}=-3H+3q^sH=3(q^s-1)H. $$
Consequently,
$$ K_{\mathbf{P}^2}+\frac{R_{f^s}}{q^s-1}\sim_\mathbb{Q} 0. $$
Thus it remains to find an iterate for which $(\mathbf{P}^2,\frac{R_{f^s}}{q^s-1})$ is log canonical.

\smallskip
\noindent\textbf{Step 2.}
Recall $\mathcal{E}_1(f)$ and $\mathcal{E}_2(f)$ from Definition~\ref{def: FJ-exceptional-sets}. If $\mathcal{E}_1(f)\neq\varnothing$, the result follows from Theorem~\ref{thm: nonempty-E1}. We may therefore assume that $\mathcal{E}_1(f)=\varnothing$.
Proposition~\ref{prop: FJ-exceptional}(iii) gives constants $C>0$ and $\rho<q$ such that
$$ \mathrm{ord}_x(R_{f^s})\leq C\rho^s $$
for every $s\geq 1$ and every closed point $x\notin\mathcal{E}_2(f)$.

If $x\notin\mathcal{E}_2(f)$ lies in $\mathrm{Supp}(R_{f^s})$, then Lemma~\ref{lem: classical izumi type inequality} and the scaling property of log canonical thresholds give
\begin{equation}\tag{$\ast$}\label{eq: uniform-lct-estimate}
\mathrm{lct}_x\left(\mathbf{P}^2;\frac{R_{f^s}}{q^s-1}\right)=(q^s-1)\mathrm{lct}_x(\mathbf{P}^2;R_{f^s})\geq\frac{q^s-1}{\mathrm{ord}_x(R_{f^s})}\geq\frac{q^s-1}{C\rho^s}.
\end{equation}
If $\mathcal{E}_2(f)=\varnothing$, then, since $\rho<q$, the pair is klt on $\mathrm{Supp}(R_{f^s})$ for all sufficiently large $s$ by \eqref{eq: uniform-lct-estimate}, and it is log smooth outside the support. Hence the pair is globally klt for all sufficiently large $s$. So we are left with the only situation $\mathcal{E}_2(f)\neq\varnothing$.

\smallskip
\noindent\textbf{Step 3.}
Throughout this step, whenever $f$ is replaced by $f^r$, the symbols $f$ and $q$ are reused for $f^r$ and $q^r$, respectively.
By Proposition~\ref{prop: FJ-exceptional}(ii), the finite set $\mathcal{E}_2(f)$ is totally invariant. After replacing $f$ by an iterate, we may assume that every point $p\in\mathcal{E}_2(f)$ is $f^{-1}$-invariant.
By miracle flatness, $f$ is finite flat of global degree $q^2$, so the fixed-point germ $f:(\mathbf{P}^2,p)\to(\mathbf{P}^2,p)$ also has local topological degree $q^2$. The germ is superattracting by Proposition~\ref{prop: FJ-exceptional}(ii), and $c_\infty(p,f)=q$.

For each $p\in\mathcal{E}_2(f)$, the differential $df_p$ is nilpotent; hence, by the chain rule, $d(f^2)_p=0$. Theorem~\ref{thm: balanced-local} gives a primitive divisorial valuation $v_p$ centered at $p$ satisfying $(f^2)_*v_p=q^2v_p$. Replacing $f$ by $f^2$ and $q$ by $q^2$, we may therefore assume simultaneously that, for every $p\in\mathcal{E}_2(f)$, $df_p=0$ and
$$ f_*v_p=qv_p. $$
After changing $C$ and $\rho$, Proposition~\ref{prop: FJ-exceptional}(iii) shows that \eqref{eq: uniform-lct-estimate} remains valid for the resulting endomorphism.

For each $p\in\mathcal{E}_2(f)$, Proposition~\ref{prop: crepant-descent}, applied to $f$ and $v_p$, gives an integer $s_p\geq 1$ such that $(\mathbf{P}^2,\frac{R_{f^{s_p}}}{q^{s_p}-1})$ is log canonical in a neighborhood of $p$. Choose a common multiple $s$ of the integers $s_p$ so large that $(q^s-1)/(C\rho^s)>1$.
Recall from Definition~\ref{defn: log discrepancy quasi-monomial valuations} that $A_{\mathbf{P}^2}$ denotes the log discrepancy function of $(\mathbf{P}^2,0)$.
A reformulation of \cite[Proposition~5.20]{KM98} gives, for every divisorial valuation $w$ and every $m\geq 1$,
$$ A_{\mathbf{P}^2}((f^m)_*w)=A_{\mathbf{P}^2}(w)+w(R_{f^m}). $$
Fix $p\in\mathcal{E}_2(f)$ and let $w$ be a divisorial valuation whose center on $\mathbf{P}^2$ has closure containing $p$. The local log canonicity for $f^{s_p}$ gives
$$ A_{\mathbf{P}^2}((f^{s_p})_*w)\leq q^{s_p}A_{\mathbf{P}^2}(w). $$
Since $f$ fixes $p$, every successive pushforward under $f^{s_p}$ has center whose closure contains $p$. Iterating the preceding inequality $s/s_p$ times therefore gives
$$ A_{\mathbf{P}^2}((f^s)_*w)\leq q^sA_{\mathbf{P}^2}(w). $$
Equivalently,
$$
A_{\left(\mathbf{P}^2,R_{f^s}/(q^s-1)\right)}(w)
 =\frac{q^sA_{\mathbf{P}^2}(w)-A_{\mathbf{P}^2}((f^s)_*w)}{q^s-1}
 \geq 0.
$$
Thus $(\mathbf{P}^2,R_{f^s}/(q^s-1))$ is log canonical at every point of $\mathcal{E}_2(f)$.

By \eqref{eq: uniform-lct-estimate}, the pair is klt at every point of $\mathrm{Supp}(R_{f^s})\setminus\mathcal{E}_2(f)$, and it is log smooth outside the support. Therefore $(\mathbf{P}^2,R_{f^s}/(q^s-1))$ is log canonical on $\mathbf{P}^2$ and klt on $\mathbf{P}^2\setminus\mathcal{E}_2(f)$. This finishes the proof.
\end{proof}

\begin{remark}
The preceding local-to-global argument can be simplified by directly applying \cite[Theorem~1.16(2)]{CZ}.
\end{remark}

\section{The Jacobian exceptional set}

Let $f:\mathbf{P}^n\to\mathbf{P}^n$ be a $q$-polarized endomorphism, where $q>1$. Locally, the ramification divisor $R_f$ is defined by the Jacobian determinant of $f$. Thus $\mathrm{ord}_x(R_f)$ is the order of vanishing of a local Jacobian determinant at $x$. The chain rule gives
$$ R_{f^{s+t}}=R_{f^s}+(f^s)^*R_{f^t} $$
for all $s,t\geq1$.

A function $\varphi:\mathbf{P}^n\to\mathbb{R}$ is \emph{Zariski upper semicontinuous} if $\{x:\varphi(x)\geq a\}$ is Zariski closed for every $a\in\mathbb{R}$. On the smooth variety $\mathbf{P}^n$, the critical divisor of $f^s$ in the notation of \cite{Par11} is the ramification divisor $R_{f^s}$. Following \cite[\S~4.1 and the proof of Theorem~3.5]{Par11}, set
$$ \mu_s(x):=n+\mathrm{ord}_x(R_{f^s}),\qquad s\geq1. $$
The family $(\mu_s)_{s\geq1}$ is the \emph{Jacobian cocycle} of $f$. Each $\mu_s$ is Zariski upper semicontinuous, and \cite[Proposition~4.1]{Par11} gives
$$ \mu_{s+t}(x)\leq\mu_s(x)\mu_t(f^s(x)) $$
for all $s,t\geq1$. This property is called \emph{submultiplicativity}. A closed subset $Z\subseteq\mathbf{P}^n$ is \emph{totally invariant} if $f^{-1}(Z)=Z$; since $f$ is surjective, this also implies $f(Z)=Z$.

By \cite[\S~4.1]{Par11}, the limit
$$ \mu_\infty(x):=\lim_{s\to\infty}\mu_s(x)^{1/s} $$
exists for every $x\in\mathbf{P}^n$. Moreover, \cite[Theorem~4.3]{Par11} identifies the set introduced in the Introduction with
$$ \mathcal{E}_{\mathrm{Jac}}(f)=\{x\in\mathbf{P}^n:\mu_\infty(x)=q\}. $$

\begin{proof}[Proof of Theorem~\ref{thm: fixed Jacobian exceptional set}]
The algebraicity and total invariance of $\mathcal{E}_{\mathrm{Jac}}(f)$ follow from \cite[Theorem~4.3 and Corollary~4.5]{Par11}. Taking $s=1$ in the defining inequalities gives $\mathcal{E}_{\mathrm{Jac}}(f)\subseteq\mathrm{Supp}(R_f)$, so this set is proper. By \cite[Theorem~4.7]{Par11}, there exist $C>0$ and $0\leq\rho<q$ such that $\mu_s(x)\leq C\rho^s$ for every $s\geq1$ and every $x\notin\mathcal{E}_{\mathrm{Jac}}(f)$.

If $x\notin\mathcal{E}_{\mathrm{Jac}}(f)$ lies in $\mathrm{Supp}(R_{f^s})$, the scaling property of log canonical thresholds and Lemma~\ref{lem: classical izumi type inequality} give
$$ \mathrm{lct}_x\left(\mathbf{P}^n;\frac{R_{f^s}}{q^s-1}\right)=(q^s-1)\mathrm{lct}_x(\mathbf{P}^n;R_{f^s})\geq\frac{q^s-1}{\mathrm{ord}_x(R_{f^s})}\geq\frac{q^s-1}{C\rho^s}. $$
For all sufficiently large $s$, the right-hand side is greater than one independently of $x$. Hence the pair is klt at every such point, and it is also klt outside $\mathrm{Supp}(R_{f^s})$. This proves the last assertion.
\end{proof}

\begin{remark}
The preceding description exhibits a gap in the exponential growth of the Jacobian cocycle. If $x\in\mathcal{E}_{\mathrm{Jac}}(f)$, then $\mathrm{ord}_x(R_{f^s})\geq\delta_xq^s$ for some $\delta_x>0$ and every $s\geq1$. Since $\mathrm{ord}_x(R_{f^s})\leq\deg R_{f^s}=(n+1)(q^s-1)$, it follows that $\mu_\infty(x)=q$. On the complement of $\mathcal{E}_{\mathrm{Jac}}(f)$, \cite[Theorem~4.7]{Par11} gives $\mu_s(x)\leq C\rho^s$ for some $C>0$ and $\rho<q$, and hence $\mu_\infty(x)\leq\rho<q$. Thus $\mathcal{E}_{\mathrm{Jac}}(f)$ is precisely the locus of maximal exponential growth.
\end{remark}

\begin{remark}
The functions $\eta_s(x):=1+\mathrm{lct}_x(\mathbf{P}^n;R_{f^s})^{-1}$ also form an analytic submultiplicative cocycle in the sense of \cite[Definition~1.1 and Theorem~1.2]{Din09}. Its reverse maximal-growth locus is a proper algebraic subset of $\mathbf{P}^n$ contained in $\mathcal{E}_{\mathrm{Jac}}(f)$ and is forward invariant. The construction does not give total invariance, so this locus does not provide a fixed exceptional set containing the non-klt loci of the normalized ramification pairs.
\end{remark}

\section{Further questions}

\subsection{Higher-dimensional eigenvaluations and equivariant extraction}

The surface proof combines the divisorial eigenvaluation supplied by Theorem~\ref{thm: balanced-local} with the equivariant extraction of Corollary~\ref{cor: equivariant-extraction-surface}. Neither step has a direct higher-dimensional analogue.

\begin{question}\label{ques: higher-dimensional eigenvaluations}
Let $n\geq3$ and $F:(\mathbb{C}^n,0)\to(\mathbb{C}^n,0)$ be a finite superattracting germ satisfying $d(F)=c_\infty(F)^n=q^n$, where $q>1$ is an integer. Here $d(F)$ is the local degree and $c_\infty(F)$ the asymptotic attraction rate. Does there exist an integer $m\geq1$ and a quasi-monomial valuation $v$ centered at the origin such that
$$ (F^m)_*v=q^m v? $$
Can $v$ always be chosen to be primitive divisorial? More generally, if $f:\mathbf{P}^n\to\mathbf{P}^n$ is $q$-polarized and $Z\subseteq\mathcal{E}_{\mathrm{Jac}}(f)$ is a totally invariant irreducible subvariety of codimension at least two, can one find an integer $s\geq1$ and a primitive divisorial valuation $v$ centered at $Z$ such that
$$ (f^s)_*v=q^s v? $$
\end{question}

On a smooth surface, every primitive divisorial valuation centered at a closed point is the unique Rees valuation of a suitable simple complete ideal \cite[Appendix~5]{ZS}. The analogous special complete ideal may have several Rees valuations in higher dimensions \cite[Theorem~6.8]{HK}.

\begin{question}\label{ques: equivariant extraction without unique Rees valuation}
Let $X$ be a smooth projective variety of dimension at least three, $f:X\to X$ a $q$-polarized endomorphism, and $p\in X$ a point satisfying $f^{-1}(p)=\{p\}$. Suppose that $v$ is a primitive divisorial valuation centered at $p$ and $f_*v=qv$. Without assuming that $v$ is the unique Rees valuation of an $\mathfrak{m}_p$-primary ideal, can $v$ be extracted on a normal $\mathbb{Q}$-factorial projective model carrying a finite $q$-polarized lift $g$ of $f$, possibly together with additional exceptional prime divisors, so that the divisor $E$ realizing $v$ satisfies $g^{-1}(E)=E$ and $g^*E=qE$? More generally, can one find an $\mathfrak{m}_p$-primary ideal whose Rees valuations contain $v$ and are permuted by $q^{-1}f_*$, and then extract them simultaneously so that the reduced exceptional divisor is totally invariant?
\end{question}

A relative version of Question~\ref{ques: equivariant extraction without unique Rees valuation} would be needed for the last part of Question~\ref{ques: higher-dimensional eigenvaluations}. Even then, extraction alone would not resolve the higher-dimensional adjunction problem.

\subsection{Effective iteration bounds}

Theorem~\ref{thm: main projective plane} gives an iterate depending on $f$, while \cite[Example~1.5]{LZ26} shows that the first iterate need not suffice.

\begin{question}\label{ques: effective iteration bound}
For a fixed integer $q>1$, is there an effective bound $N(q)$ such that every $q$-polarized endomorphism $f:\mathbf{P}^2\to\mathbf{P}^2$ admits an integer $1\leq s\leq N(q)$ for which $(\mathbf{P}^2,R_{f^s}/(q^s-1))$ is log canonical? Can $N(q)$ be chosen independently of $q$?
\end{question}

\subsection{Normalized-volume dynamics}

We now recall the volume of a valuation \cite{ELS03} and the normalized volume of a klt singularity \cite{Li18}; see also \cite[Definitions~2.1 and~2.3]{LLX}.

\begin{definition}\label{defn: normalized volumes}
Let $(X,\Delta)$ be an $n$-dimensional klt pair, let $x\in X$ be a closed point, and let $v\in\mathrm{Val}_{X,x}$. The volume of $v$ is
$$ \mathrm{vol}_{X,x}(v):=\limsup_{m\to\infty}\frac{\ell_{\mathcal{O}_{X,x}}\bigl(\mathcal{O}_{X,x}/\mathfrak{a}_m(v)\bigr)}{m^n/n!}. $$
The normalized volume of $v$ with respect to $(X,\Delta)$ is
$$
\widehat{\mathrm{vol}}_{(X,\Delta),x}(v):=
\begin{cases}
A_{(X,\Delta)}(v)^n\mathrm{vol}_{X,x}(v), & \text{if }A_{(X,\Delta)}(v)<+\infty,\\
+\infty, & \text{if }A_{(X,\Delta)}(v)=+\infty.
\end{cases}
$$
The normalized volume of the singularity $x\in(X,\Delta)$ is
$$ \widehat{\mathrm{vol}}(x,X,\Delta):=\inf_{v\in\mathrm{Val}_{X,x}}\widehat{\mathrm{vol}}_{(X,\Delta),x}(v). $$ By definition, we have $\widehat{\mathrm{vol}}_{(X,\Delta),x}(\lambda v)=\widehat{\mathrm{vol}}_{(X,\Delta),x}(v)$ for every $\lambda\in\mathbb{R}_{>0}$.
A valuation attaining the above infimum is called a \emph{normalized volume minimizer}.
\end{definition}

Normalized volume minimizers play a central role in the local K-stability theory of klt singularities. The existence, quasi-monomiality, and uniqueness results developed in \cite{Blu18,Xu20,XZ21,BLQ24}, together with related finite-generation results in the global stability-threshold setting \cite{LXZ22}, culminate in the stable degeneration theorem \cite[Theorem~1.2(1)]{XZ25}, which associates to every klt singularity a K-semistable log Fano cone degeneration.

\medskip

Let $X$ be a smooth projective variety of dimension $n$, let $f: X\to X$ be a finite $q$-polarized endomorphism, and let $x\in X$ be a totally invariant point, so that $f^{-1}(x)=\{x\}$. The global degree of $f$ is $q^n$. Since $f$ is finite flat and its fiber over $x$ is supported at $x$, the local degree of the germ $f:(X,x)\to(X,x)$ is also $q^n$.

Put $R=\mathcal{O}_{X,x}$ and $\mathfrak{m}=\mathfrak{m}_x$. For a valuation $v\in\mathrm{Val}_{X,x}$ normalized by $v(\mathfrak{m})=1$, recall that
$$ (f_*v)(h):=v(f^*h),\qquad c(f,v):=v(f^*\mathfrak{m}),\qquad f_\bullet v:=\frac{1}{c(f,v)}f_*v. $$
Here $h\in K(X)^*$. Then $f_\bullet v(\mathfrak{m})=1$, and $(f^s)_\bullet v=f_\bullet^s v$ for every $s\geq 1$. In this subsection, we abbreviate
$\widehat{\mathrm{vol}}_x(v):=\widehat{\mathrm{vol}}_{(X,0),x}(v)$.

The equality criterion in Lemma~\ref{lem: local-volume} gives the following exact description.

\begin{proposition}\label{prop: normalized-volume defect}
In the above setting, let $v\in\mathrm{Val}_{X,x}$ be divisorial and normalized by $v(\mathfrak{m})=1$. For every integer $s\geq 1$, write $v_s:=f_\bullet^s v$ and $c_s:=c(f^s,v)$, and set
$$ D_s(v):=\frac{q^{ns}\mathrm{vol}_{X,x}(v_s)}{c_s^n\mathrm{vol}_{X,x}(v)}. $$
\begin{enumerate}
    \item Let $\mathfrak{b}^{(s,v)}_t:=(f^s)^*\mathfrak{a}_t(v_s)R$, and $\mathfrak{c}^{(s,v)}_t:=\mathfrak{a}_{c_s t}(v)$. Then $D_s(v)\geq 1$, and equality holds if and only if $$ \widetilde{\mathfrak{b}}^{(s,v)}_\bullet=\mathfrak{c}^{(s,v)}_\bullet. $$
    \item $D_{s+r}(v)=D_s(v)D_r(v_s)$ for all integers $s,r\geq 1$. In particular, after setting $D_0(v)=1$, the sequence $D_s(v)$ is nondecreasing.
    \item We have $$ \frac{\widehat{\mathrm{vol}}_x(v_s)}{\widehat{\mathrm{vol}}_x(v)}=D_s(v)\left(\frac{A_X((f^s)_*v)}{q^sA_X(v)}\right)^n. $$
    Moreover, $$ \widehat{\mathrm{vol}}_x(v_s)\leq\widehat{\mathrm{vol}}_x(v)\quad\Longleftrightarrow\quad A_{(X,R_{f^s}/(q^s-1))}(v)\geq\frac{q^s}{q^s-1}A_X(v)\left(1-D_s(v)^{-1/n}\right)\geq 0. $$
    In particular, normalized volume nonincrease implies $A_{(X,R_{f^s}/(q^s-1))}(v)\geq 0$, and strict decrease implies $A_{(X,R_{f^s}/(q^s-1))}(v)>0$.
\end{enumerate}
\end{proposition}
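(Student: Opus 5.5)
Part (1) is Lemma~\ref{lem: local-volume} applied to the finite flat local homomorphism $\varphi=(f^s)^*:R\to R$ of rank $d=q^{ns}$, with the valuation $v$. With $c=v((f^s)^*\mathfrak{m})=c(f^s,v)=c_s$ and $\mu=c_s^{-1}(f^s)_*v=(f^s)_\bullet v=v_s$, the inequality \eqref{eq: volume-ineq} reads $c_s^n\mathrm{vol}(v)\leq q^{ns}\mathrm{vol}(v_s)$, that is, $D_s(v)\geq1$. Here $\mathrm{vol}(v)>0$ and is finite because $v$ is divisorial centered at $x$. Lemma~\ref{lem: local-volume} also gives the equality criterion $\widetilde{\mathfrak b}_\bullet=\widetilde{\mathfrak c}_\bullet$. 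Since $\mathfrak{c}^{(s,v)}_\bullet=\mathfrak{a}_\bullet(c_s^{-1}v)$ is a positive-volume valuation filtration, it is saturated by \cite[Lemma~3.20]{BLQ24}, so $\widetilde{\mathfrak c}_\bullet=\mathfrak c_\bullet$.

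For (2), I compute with the cocycle. We have $c_{s+r}=c(f^{s+r},v)=c(f^s,v)\,c(f^r,v_s)=c_s\,c_r(v_s)$. The multiplicativity of $c(f^n,\cdot)$ proved in the valuative-tree section only uses $f_*(g_*v)=(g f)_*v$ and homogeneity, so it holds verbatim here. Also $v_{s+r}=f_\bullet^r(v_s)$. Then
$$D_s(v)D_r(v_s)=\frac{q^{ns}\mathrm{vol}(v_s)}{c_s^n\mathrm{vol}(v)}\cdot\frac{q^{nr}\mathrm{vol}(v_{s+r})}{c_r(v_s)^n\mathrm{vol}(v_s)},$$
and the $\mathrm{vol}(v_s)$ factors cancel, giving $D_{s+r}(v)$. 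Note that $v_s$ is again divisorial and normalized, since $f_\bullet$ preserves divisoriality, so $D_r(v_s)\geq1$ by (1). Hence $D_{s+1}(v)=D_s(v)D_1(v_s)\geq D_s(v)$.

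For (3), use the scaling $\widehat{\mathrm{vol}}_x(v_s)=\widehat{\mathrm{vol}}_x((f^s)_*v)=A_X((f^s)_*v)^n\mathrm{vol}((f^s)_*v)$ with $\mathrm{vol}((f^s)_*v)=c_s^{-n}\mathrm{vol}(v_s)$, since $\mathrm{vol}(\lambda w)=\lambda^{-n}\mathrm{vol}(w)$. Dividing by $A_X(v)^n\mathrm{vol}(v)$ and inserting $q^{ns}/q^{ns}$ gives the displayed identity. For the equivalence, put $A':=A_X((f^s)_*v)$ and use the relation from the proof of Theorem~\ref{thm: main projective plane}:
$$A_{(X,R_{f^s}/(q^s-1))}(v)=\frac{q^sA_X(v)-A'}{q^s-1},$$
which follows from $A'=A_X(v)+v(R_{f^s})$ \cite[Proposition~5.20]{KM98} and $A_{(X,\Delta)}=A_X-v(\Delta)$. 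By the identity, volume nonincrease is equivalent to $A'\leq q^sA_X(v)D_s(v)^{-1/n}$. Since $A_X(v)>0$, this is equivalent to
$$q^sA_X(v)-A'\geq q^sA_X(v)\bigl(1-D_s^{-1/n}\bigr),$$
which after dividing by $q^s-1$ is the stated inequality. Its right side is $\geq0$ by $D_s\geq1$. Strict decrease gives a strict inequality, hence $A_{(X,R_{f^s}/(q^s-1))}(v)>0$.

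The only delicate point is (1): checking that Lemma~\ref{lem: local-volume} applies to $(f^s)^*$ with the right $c$ and $\mu$, and that saturation of $\mathfrak c_\bullet$ holds. The remainder of the proof is bookkeeping.
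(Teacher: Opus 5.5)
Your proof is correct and follows the paper's argument essentially step by step. Part (1) applies Lemma~\ref{lem: local-volume} to $(f^s)^*$ with $c=c_s$ and $\mu=v_s$, using \cite[Lemma~3.20]{BLQ24} for the saturation of $\mathfrak{c}_\bullet$. Part (2) uses the cocycle identity $c(f^{s+r},v)=c_s\,c(f^r,v_s)$. Part (3) uses scaling invariance of $\widehat{\mathrm{vol}}$ together with \cite[Proposition~5.20]{KM98}.

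Two cosmetic points. First, the paper cites the multiplicativity of the attraction cocycle from \cite{FJ07} rather than proving it; your direct verification via functoriality of pushforward is what actually justifies it here. Second, that functoriality should read $f_*(g_*v)=(f\circ g)_*v$, although for iterates of a single map the order is immaterial.
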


\begin{proof}
Apply Lemma~\ref{lem: local-volume} to $(f^s)^*:R\to R$, whose rank is $q^{ns}$, with $\mu=v_s$ and $c=c_s$. This gives $D_s(v)\geq 1$. Since $v$ is divisorial and $\mathrm{vol}_{X,x}(v)>0$, $\mathfrak{c}^{(s,v)}_\bullet$ is saturated by \cite[Lemma~3.20]{BLQ24}. The equality criterion in Lemma~\ref{lem: local-volume} gives the stated equality condition. This proves $(1)$.

Since $v$ is divisorial, every $v_s$ is divisorial. Moreover, $v_{s+r}=f_\bullet^r v_s$ and
$$ c(f^{s+r},v)=c(f^s,v)\cdot \frac{1}{c(f^s,v)} v((f^s)^*((f^r)^*\mathfrak{m}))=c_s\,((f^s)_\bullet v)((f^r)^*\mathfrak{m})=c_s\,c(f^r,v_s). $$
Substitution into the definition of $D_{s+r}(v)$ gives $D_{s+r}(v)=D_s(v)D_r(v_s)$. Since every factor $D_r(v_s)$ is at least one, the monotonicity assertion follows. This proves $(2)$.

Since normalized volume is invariant under positive rescaling and $(f^s)_*v=c_s v_s$, we have
\begin{equation}\label{eq: fact of local volume}
    \frac{\widehat{\mathrm{vol}}_x(v_s)}{\widehat{\mathrm{vol}}_x(v)}=\frac{A_X((f^s)_*v)^n\mathrm{vol}_{X,x}(v_s)}{c_s^nA_X(v)^n\mathrm{vol}_{X,x}(v)}=D_s(v)\left(\frac{A_X((f^s)_*v)}{q^sA_X(v)}\right)^n,
\end{equation}
which is the asserted factorization.
By \cite[Proposition~5.20]{KM98},
$$ A_X((f^s)_*v)=A_X(v)+v(R_{f^s}). $$
This gives
\begin{equation}\label{eq: log dis formula}
    A_{(X,R_{f^s}/(q^s-1))}(v)=A_X(v)-\frac{v(R_{f^s})}{q^s-1}=\frac{q^sA_X(v)-A_X((f^s)_*v)}{q^s-1}.
\end{equation}
Combining \eqref{eq: log dis formula} and \eqref{eq: fact of local volume} and solving for $A_{(X,R_{f^s}/(q^s-1))}(v)$ proves the stated equivalence. The remaining consequences follow from $D_s(v)\geq 1$.
\end{proof}

Proposition~\ref{prop: normalized-volume defect} indicates that if there exists a uniform $s$ such that $\widehat{\mathrm{vol}}_x(f_\bullet^s v)\leq\widehat{\mathrm{vol}}_x(v)$ for all divisorial valuations $v\in \mathrm{Val}_{X,x}$ normalized by $v(\mathfrak{m})=1$, then $(X,\frac{R_{f^s}}{q^s-1})$ is lc near $x$. Its exact criterion shows that such a statement requires simultaneous control of the discrepancy term and the saturation defect.

\begin{question}\label{ques: normalized-volume dynamics}
Is there a useful description of the behavior of the sequence $\{\widehat{\mathrm{vol}}_x(f_\bullet^s v)\}_{s\geq 0}$?
\end{question}

\end{document}